\newtheorem{theorem}{Theorem}[section]
\newtheorem{lemma}[theorem]{Lemma}
\newtheorem{proposition}[theorem]{Proposition}
\newtheorem{definition}[theorem]{Definition}
\newtheorem{remark}[theorem]{Remark}
\theoremstyle{remark}
\theoremstyle{definition}
\newcommand{\cb}{\mathcal B}
\newcommand{\br}{\mathbb{R}}
\newcommand{\gtg}{G \times \widehat{G}}
\newcommand{\wo}{\widehat{\otimes}}
\title[Characterization of Weyl multipliers]{Characterization of vector-valued $L^1$-$L^p$ multipliers for  Weyl transform}
\author{Ritika Singhal}
\address{Ritika Singhal \newline \hspace*{0.25cm} 
	Department of Mathematics\newline \hspace*{0.25cm}
	Indian Institute of Technology Delhi\newline \hspace*{0.25cm}
	Delhi - 110016\newline \hspace*{0.25cm}
	India.}
\email{ritikasinghal1120@gmail.com}
\author{N. Shravan Kumar}
\address{N. Shravan Kumar \newline \hspace*{0.25cm}
	Department of Mathematics\newline \hspace*{0.25cm}
	Indian Institute of Technology Delhi\newline \hspace*{0.25cm}
	Delhi - 110016\newline \hspace*{0.25cm}
	India.}
\email{shravankumar.nageswaran@gmail.com}
\subjclass[2020]{Primary 42B15, 46G10 ; Secondary 43A40, 43A15, 43A25}
\keywords{Weyl transform, Weyl multipliers, Bochner integral, multipliers, vector-valued functions}
\begin{document}
	\maketitle
	
	\begin{abstract}
		In this article, we will characterize Weyl multipliers for the pair $(L^1(G \times \widehat{G}; A), L^p(G \times \widehat{G};A))$,  for $1 \leq p< \infty$, under the assumption that $A$ is a complex Banach algebra with a bounded approximate identity.\end{abstract}	
		
		\section{Introduction}
		One of the classical problems in harmonic analysis is the study of the Fourier multipliers, which aims to characterize the behaviour of certain bounded operators on function spaces using the Fourier transform. The well-known H\"ormander multiplier theorem \cite{Hor} provided sufficient conditions for a measurable function to be a $L^p$-$L^q(\br^n)$ Fourier multiplier for $1 \leq p,q < \infty$. One of the well-known results in characterizing $L^1$-$L^p$ multipliers for a locally compact abelian group $G$,
		states that an operator $T$ from $L^1(G)$ to $L^p(G), \  1 \leq p < \infty$, which commutes with translations is given by $Tf=f * \nu$ for $\nu \in M(G)$ if $p=1$ and  $\nu \in L^p(G)$ if $1<p <\infty$. More generally, we have the following result for $L^1$-multipliers.
		\begin{theorem}[\cite{MR435738}, Theorem 0.1.1] \label{FMT}
		Let $G$ be a locally compact abelian group, and suppose $T: L^1(G) \rightarrow L^1(G)$ is a continuous linear transformation. Then the following are equivalent: \begin{enumerate}
			\item[(i)] $T$ commutes with the translation operators.
			\item[(ii)] $T(f * g)=T f * g$ for each $f, g \in L^1(G)$.
			\item[(iii)] There exists a unique function $\phi$ defined on $\widehat{G}$ such that $(T f)^{\wedge}=\phi \hat{f}$ for each $f \in L^1(G)$.
			\item[(iv)] There exists a unique measure $\mu \in M(G)$ such that $(T f)^{\wedge}=\hat{\mu} \hat{f}$ for each $f \in L^1(G)$.
			\item[(v)] There exists a unique measure $\mu \in M(G)$ such that $T f=f * \mu$ for each $f \in L^1(G)$.
		\end{enumerate}
		\end{theorem}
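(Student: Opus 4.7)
The plan is to close the cycle (v) $\Rightarrow$ (iv) $\Rightarrow$ (iii) $\Rightarrow$ (ii) $\Rightarrow$ (i) $\Rightarrow$ (v), so that any single condition pulls along all the rest. Four of the five links are routine bookkeeping with the Fourier transform and a bounded approximate identity in $L^1(G)$. Specifically, (v) $\Rightarrow$ (iv) is the convolution theorem $\widehat{f \ast \mu} = \hat{f}\,\hat{\mu}$; (iv) $\Rightarrow$ (iii) is trivial with $\phi = \hat{\mu}$, and uniqueness of $\phi$ follows because for any character $\gamma \in \widehat{G}$ one can find $f \in L^1(G)$ with $\hat{f}(\gamma) \neq 0$. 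For (iii) $\Rightarrow$ (ii) I would compute
\[
\bigl(T(f\ast g)\bigr)^{\wedge} \;=\; \phi\,\widehat{f\ast g} \;=\; \phi\,\hat{f}\,\hat{g} \;=\; (Tf)^{\wedge}\,\hat{g} \;=\; (Tf \ast g)^{\wedge}
\]
and invoke injectivity of the Fourier transform on $L^1(G)$. For (ii) $\Rightarrow$ (i), fix a bounded approximate identity $(e_\alpha)$ in $L^1(G)$. Then $\tau_x f = \lim_\alpha f \ast \tau_x e_\alpha$ in $L^1$, so
\[
T(\tau_x f) = \lim_\alpha T(f \ast \tau_x e_\alpha) = \lim_\alpha Tf \ast \tau_x e_\alpha = \lim_\alpha \tau_x(Tf \ast e_\alpha) = \tau_x T f,
\]
using continuity of translation in $L^1(G)$.

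The substantive work lies in (i) $\Rightarrow$ (v). First I would upgrade translation invariance to convolution commutation: writing $f \ast g = \int_G f(y)\,\tau_y g\,dy$ as a Bochner integral in $L^1(G)$ and using boundedness of $T$ together with its commutation with each $\tau_y$ yields $T(f \ast g) = f \ast Tg$, and then by commutativity of convolution on $L^1(G)$ also $T(f \ast g) = Tf \ast g$. Next, for a bounded approximate identity $(e_\alpha)$ in $L^1(G)$, set $\mu_\alpha := T e_\alpha \in L^1(G)$, viewed as an element of $M(G)$. Since $\|\mu_\alpha\|_{M(G)} = \|\mu_\alpha\|_1 \leq \|T\|\sup_\alpha \|e_\alpha\|_1$, the net $(\mu_\alpha)$ is contained in a norm-bounded, hence weak-$\ast$ compact (Banach--Alaoglu applied to $M(G) = C_0(G)^\ast$) subset of $M(G)$, from which I extract a weak-$\ast$ cluster point $\mu \in M(G)$.

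To identify $\mu$, I would test against $f \in C_c(G)$. For each $x \in G$ the map $y \mapsto f(x-y)$ lies in $C_c(G) \subset C_0(G)$, so along the chosen subnet
\[
(f \ast \mu_{\alpha_\beta})(x) \;=\; \int_G f(x-y)\,d\mu_{\alpha_\beta}(y) \;\longrightarrow\; \int_G f(x-y)\,d\mu(y) \;=\; (f \ast \mu)(x).
\]
On the other hand, the convolution commutation above gives $Tf = \lim_\alpha T(f \ast e_\alpha) = \lim_\alpha f \ast \mu_\alpha$ in the $L^1$-norm. Comparing the two limits on $C_c(G)$, which is dense in $L^1(G)$, yields $Tf = f \ast \mu$ first for $f \in C_c(G)$ and then, by continuity in $f$, throughout $L^1(G)$. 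Uniqueness of $\mu$ is immediate: if $f \ast \mu \equiv 0$ for every $f \in L^1(G)$, then $\hat{f}\,\hat{\mu} \equiv 0$ on $\widehat{G}$ and hence $\mu = 0$.

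The principal obstacle is exactly the bridge in this last step, namely reconciling the weak-$\ast$ limit $\mu$ of the measures $\mu_\alpha$ with the $L^1$-norm limit $Tf$ of the functions $f \ast \mu_\alpha$. These live in different topologies on different spaces, and the resolution is that convolution with a fixed $f \in C_c(G)$ sends weak-$\ast$ convergent nets in $M(G)$ to pointwise convergent nets of continuous functions; this forces both candidate limits to agree on the dense test class $C_c(G)$, and density then propagates the identity $Tf = f \ast \mu$ to all of $L^1(G)$.
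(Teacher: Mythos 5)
The paper contains no proof of this statement: it is imported verbatim from Larsen's book (reference [MR435738], Theorem 0.1.1) as classical background, so there is no in-paper argument to compare yours against; I can only assess your write-up on its own terms. Your cycle $(v)\Rightarrow(iv)\Rightarrow(iii)\Rightarrow(ii)\Rightarrow(i)\Rightarrow(v)$ is the standard route, and four of the five links are correct as written, including the Bochner-integral upgrade of translation invariance to $T(f\ast g)=Tf\ast g$ and the extraction of a weak-$*$ cluster point $\mu$ of $(Te_\alpha)$ in $M(G)$.

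The one genuine soft spot is the identification step inside $(i)\Rightarrow(v)$, which you yourself flag. You have $f\ast\mu_{\alpha_\beta}\to f\ast\mu$ pointwise (from weak-$*$ convergence) and $f\ast\mu_\alpha\to Tf$ in $L^1$, and you conclude the limits coincide. For \emph{nets} this does not follow as stated: $L^1$-convergence of a net does not furnish an a.e.-convergent subnet, and dominated convergence fails for nets, so pointwise convergence to one candidate and norm convergence to another do not by themselves identify the two. The standard repair is to dualize completely rather than evaluate pointwise: for $h\in C_c(G)$,
\[
\int_G (f\ast\mu_\alpha)(x)\,h(x)\,dx=\int_G\Big(\int_G f(xy^{-1})h(x)\,dx\Big)\,d\mu_\alpha(y)=\int_G(\check f\ast h)\,d\mu_\alpha,
\]
where $\check f(y)=f(y^{-1})$ and $\check f\ast h\in C_0(G)$; hence along the subnet this converges to $\int_G(\check f\ast h)\,d\mu=\int_G(f\ast\mu)h\,dx$, while the $L^1$-convergence gives convergence to $\int_G(Tf)h\,dx$. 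This yields $Tf=f\ast\mu$ for $f\in C_c(G)$ and then everywhere by density. With that one adjustment your proof is complete and agrees with the classical argument in the cited source.
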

		Over the last few decades, exploring vector-valued functions has gained widespread popularity.  Akinyele \cite{MR435734} considered the problem of providing a vector version of the characterizations of
		the multipliers for $L^1(G)$ under the assumption that Banach algebra is commutative and semi-simple. Chan further considered this problem in \cite{chan} 
		for vector-valued $L^1$-multipliers on a locally compact group for a Banach algebra with a bounded approximate identity. Later, Radha in \cite{MR1377370} studied $L^1$-$L^p$ multipliers for commutative complex Banach algebras with a bounded
		approximate identity.

		In \cite{MR545325}, Michele and Mauceri proved a version of the H\"ormander’s multiplier theorem for the Heisenberg group. The study of the 
		Heisenberg group is closely related to the study of the Weyl transform introduced by  Hermann Weyl in \cite{Weyl}. It is a pseudo-differential operator associated to a measurable function on $\mathbb{R}^n \times \mathbb{R}^n$, which has applications both in quantum mechanics and partial differential equations. Mauceri \cite{Mau} introduced the concept of the Weyl multipliers and proved an analogue of the
		H\"ormander’s multiplier theorem. Thangavelu in \cite{Tha} studied the Weyl multipliers using Littlewood-Paley-Stein theory and also obtained a characterization for $L^p$-Weyl multipliers using pseudo measures in \cite{RT}. In the last few decades, there have been several other studies for the multiplier theory of the Heisenberg group and Weyl multipliers \cite{MR4280273,MR2484216, MR2319602, RsNK}. The problem gets interesting because, unlike the Fourier transform,
		the multiplier associated with the Weyl transform is
		vector-valued.

		Using twisted convolution and Radon measures, a characterization similar to Theorem \ref{FMT} was obtained in \cite{RSK} for the Weyl multipliers.
		Motivated by this, we will provide a vector-valued characterization of the Weyl multipliers, assuming that the Banach algebra under consideration has a bounded approximate identity. It is important to mention that, unlike the classical case, the twisted convolution defined for the Weyl transform is not commutative, even though the underlying group and the Banach algebra are both commutative.

		\section{Preliminaries}
		Let $G$ be a locally compact abelian group with $\widehat{G}$ as its dual group. For $1\leq p < \infty$, we will denote by $L^p(G)$, the classical $L^p$-spaces on $G$ with respect to the Haar measure on $G$.  $C_c(G)$ represents the space of all compactly supported functions on $G$ and $\mathcal{B}(L^2(G))$ denotes the Banach algebra of all bounded operators on $L^2(G)$.
		
		The \textit{Weyl transform}, denoted  $W,$ is defined as a $\mathcal{B}\left(L^{2}(G)\right)$-valued integral on $C_{c}(G \times \widehat{G})$ given by
		$$W(f) \varphi(y)=\int_{G \times \widehat{G}}f(x, \chi) \rho((x, \chi))(\varphi)(y) dx  d\chi, f \in C_c(G \times \widehat{G})$$ 
		where $\rho(x,\chi)$ is motivated from the \textit{Schr\"odinger representation} of the abstract Heisenberg group $\mathbb{H}(G)$ on $L^2(G)$ and defined as $$\rho((x, \chi))(\varphi)(y)= \chi(y) \varphi(x y), \varphi \in L^{2}(G).$$
		By density of compactly supported functions, the Weyl transform can be defined for $L^p(\gtg), 1 \leq p < \infty$. Similarly, the Weyl transform for any bounded regular
		complex Borel measure $\nu$ in $M(\gtg)$ can be defined as a bounded map on $L^2(G)$ given by
		$$W(\nu) \phi(y)=\int_{\gtg} (\rho(x,\chi) \phi)(y) \ d \nu(x,\chi), \phi \in L^2(G).$$
		For more details on Weyl transform, one can refer to \cite{F, RSK,Wong}.

		Let $\mathcal{H}$ be a  Hilbert space and $1\leq p<\infty.$ If  $T:\mathcal{H}\rightarrow \mathcal{H}$ is a compact operator, then it admits an orthonormal representation $$T=\underset{n\in\mathbb{N}}{\sum}S_T(n)\left \langle .,e_n\right \rangle  \sigma_n,$$ where $\{e_n\}$ and $\{\sigma_n\}$ are orthonormal sequences in $\mathcal{H}$ and $S_T(n)$ denotes the $n^{th}$-singular value of $T$. The \textit{$p^{th}$-Schatten-von Neumann class}, denoted $\mathcal{B}_p(\mathcal{H}),$ consists of all compact operators, $T:\mathcal{H}\rightarrow\mathcal{H}$ with $\{S_T(n)\}\in\ell^p.$
		For $T\in\mathcal{B}_p(\mathcal{H}),$ define a norm$$\|T\|_{\mathcal{B}_p(\mathcal{H})}:=\left(\underset{n\in\mathbb{N}}{\sum}|S_T(n)|^p\right)^{1/p}= \|\{S_T(n)\}\|_{l^p}.$$ The space $\mathcal{B}_p(\mathcal{H})$ with the above norm becomes a Banach space. We will denote by $\mathcal{B}_\infty(\mathcal{H})$, the space of all compact operators on $\mathcal{H}.$  
		
		In order to define an algebra structure on $C_c(G \times \widehat{G})$, we will now define the concept of twisted convolution. 
		For $f, g \in C_{c}(G \times \widehat{G}),$ the {\it twisted convolution}, denoted $f \times g,$ is defined as
		\begin{align*}
		f \times g(x, \chi):= &\int_{G \times \widehat{G}} f\left(x x'^{-1}, \chi \overline{\chi'}\right) g\left(x^{\prime}, \chi^{\prime}\right)\chi'(x x'^{-1}) d x' d  \chi' \\= &\int_{G \times \widehat{G}} f\left(x^{\prime}, \chi^{\prime}\right) g\left(x x'^{-1}, \chi \overline{\chi'}\right) \left( \chi \overline{\chi'} \right)(x') d x' d  \chi' .		
		\end{align*}
		The following theorem gives the integrability properties of the Weyl transform with respect to the twisted convolution structure. For $1 \leq p< \infty,$  throughout $ p'$  will denote the conjugate exponent of $p$.
		\begin{theorem} \cite{RSK}
		\begin{enumerate}\item[(i)] For $p\in\{1,2\}$, the space $L^p(G \times \widehat{ G})$ is a Banach algebra with pointwise addition and twisted convolution as addition and multiplication respectively.
			\item[(ii)] (Plancherel Theorem) The Weyl transform is an isometric Banach algebra isomorphism between $L^2(G\times\widehat{G})$ and $\mathcal{B}_2(L^2(G)).$
			\item[(iii)]  (Riemann-Lebesque Lemma) The Weyl transform is a bounded operator and a Banach algebra homomorphism from $L^1(G \times \widehat{G})$ to $\mathcal{B}_\infty\left(L^{2}(G)\right).$
			\item[(iv)] (Hausdorff-Young inequality) If $1<p<2,$ then the Weyl transform maps $L^p(G\times\widehat{G})$ into $\mathcal{B}_{p'}(L^2(G))$.
		\end{enumerate}
		\end{theorem}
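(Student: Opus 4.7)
The plan is to treat the four parts in a different order than stated: establish (i) for $p=1$ by a direct Fubini computation, prove the Plancherel theorem (ii) via an explicit integral kernel, deduce (i) for $p=2$ as a consequence, obtain (iii) by density, and finally obtain (iv) by Riesz--Thorin interpolation.

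For part (i) with $p=1$, the bound $\|f \times g\|_1 \leq \|f\|_1 \|g\|_1$ is immediate: pointwise $|f \times g(x,\chi)| \leq |f| * |g|(x,\chi)$ in the ordinary convolution sense because $|\chi'(xx'^{-1})| = 1$, so Young's inequality applies. Associativity and bilinearity of $\times$ are verified by direct substitution.

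For part (ii), I would change variable $u = xy$ in the defining integral to rewrite $W(f)\varphi(y) = \int_G K_f(y,u) \varphi(u)\, du$ with kernel
\[
K_f(y,u) = \int_{\widehat{G}} f(uy^{-1}, \chi) \chi(y)\, d\chi.
\]
Substituting $v = uy^{-1}$, applying Fubini, and invoking Plancherel for the Fourier transform on $\widehat{G}$ gives $\|K_f\|_{L^2(G \times G)} = \|f\|_{L^2(\gtg)}$. Since an operator with square-integrable kernel is Hilbert--Schmidt with Hilbert--Schmidt norm equal to the $L^2$-norm of the kernel, this proves $W$ is an isometry $L^2(\gtg) \to \mathcal{B}_2(L^2(G))$ on the dense subspace $C_c(\gtg)$, extending by continuity. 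Surjectivity onto $\mathcal{B}_2$ follows by running the kernel correspondence backward together with surjectivity of the $\widehat{G}$-Fourier transform. The algebra-homomorphism identity $W(f \times g) = W(f) W(g)$ is a separate Fubini calculation: the character-valued phase in the definition of $\times$ is exactly what is produced by composing the operators $\rho(x,\chi)\rho(x',\chi')$.

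With (ii) in hand, (i) for $p=2$ follows at once from $\|f \times g\|_2 = \|W(f) W(g)\|_{\mathcal{B}_2} \leq \|W(f)\|_{\mathrm{op}} \|W(g)\|_{\mathcal{B}_2} \leq \|f\|_2 \|g\|_2$, where the last step uses $\|W(f)\|_{\mathrm{op}} \leq \|W(f)\|_{\mathcal{B}_2} = \|f\|_2$. For (iii), the bound $\|W(f)\|_{\mathrm{op}} \leq \|f\|_1$ follows directly from unitarity of each $\rho(x,\chi)$ on $L^2(G)$; for $f \in C_c(\gtg)$, the operator $W(f)$ is Hilbert--Schmidt and hence compact, and since compact operators are norm-closed in $\mathcal{B}(L^2(G))$ and $C_c$ is $L^1$-dense, $W$ extends to a bounded homomorphism $L^1(\gtg) \to \mathcal{B}_\infty(L^2(G))$. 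Part (iv) is then complex interpolation: the Schatten classes form an interpolation scale between $\mathcal{B}_\infty$ and $\mathcal{B}_2$, so applying Riesz--Thorin to $W : L^1 \to \mathcal{B}_\infty$ and $W : L^2 \to \mathcal{B}_2$ yields $W : L^p(\gtg) \to \mathcal{B}_{p'}(L^2(G))$ for $1 < p < 2$.

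The main technical obstacle is the verification of $W(f \times g) = W(f) W(g)$ inside the proof of (ii): this is a fourfold integral whose character factors must be rearranged in exactly the right order, and it is precisely the computation that explains why the twisted convolution carries a non-trivial phase despite $G$ being abelian. Once this identity and the kernel computation $\|K_f\|_2 = \|f\|_2$ are established, the remaining parts follow from standard tools (Fubini, Plancherel on $\widehat{G}$, kernel-operator theory, density, and Riesz--Thorin).
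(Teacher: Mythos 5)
Your proposal is correct and follows the standard route (pointwise domination by ordinary convolution plus Young's inequality for $p=1$; the integral-kernel/Hilbert--Schmidt identification and Plancherel on $\widehat{G}$ for part (ii); density of $C_c$ into the compacts for (iii); complex interpolation of the Schatten scale for (iv)), which is essentially the argument in the cited source. Note that the paper itself states this theorem without proof, quoting it from \cite{RSK}, so there is no in-paper argument to contrast with; the only point worth making explicit in a written-out version is that the interpolation step requires the operator-valued (Schatten-class) form of Riesz--Thorin rather than the scalar one, which you already acknowledge.
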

		To simplify the twisted convolution, we now introduce the notion of twisted translation, an analogue for the classical translation. For $(x, \chi) \in G \times \widehat{G}$, the twisted translates of $f: G \times \widehat{G} \rightarrow \mathbb{C}$ are defined as follows
		\begin{align*}
		\left( T^t_{(x',\chi')}f\right)(x,\chi )=& f\left(x x'^{-1}, \chi \overline{\chi'}\right) \chi'(x x'^{-1}); \\
		\left( L^t_{(x',\chi')}f\right)(x,\chi )=&f\left(x x'^{-1}, \chi \overline{\chi'}\right) \chi \overline{\chi'}(x') ,
		\end{align*}
		so that
		\begin{align*}
		f \times g(x, \chi):= &\int_{G \times \widehat{G}} 	\left(T^t_{(x',\chi')}f \right)(x,\chi ) g\left(x^{\prime},\chi^{\prime}\right)  d x' d  \chi' \\;
  = &\int_{G \times \widehat{G}} f\left(x^{\prime}, \chi^{\prime}\right) 	\left( L^t_{(x',\chi')}g \right)(x,\chi )d x' d  \chi'.	
		\end{align*} 
		Clearly, for $1\leq p < \infty$, twisted translation is an isometry from $L^p(\gtg)$ to itself. 
		Note that the need to define the operator $L^t$ arises because, unlike the classical translation, 
		$$\int_{G \times \widehat{G}} 	\left(T^t_{(x',\chi')}f \right)(x,\chi ) g\left(x^{\prime},\chi^{\prime}\right)  d x' d  \chi' \neq \int_{G \times \widehat{G}} f\left(x^{\prime}, \chi^{\prime}\right) 	\left( T^t_{(x',\chi')}g \right)(x,\chi )d x' d  \chi'.$$
		The following lemma shows that the twisted translation operator occurs naturally in the study of the Weyl transform.
		
		\begin{lemma}
		For $(x, \chi) \in G \times \widehat{G}$ and $f \in L^p(G \times \widehat{G}), 1 \leq p < \infty $, we have 
		
		\begin{enumerate}
			\item[(i)] 	$W\left(T_{(x, \chi)}^t f\right)=W(f) \rho(x, \chi)$;\\
			\item[(ii)]$ W\left(L_{(x, \chi)}^t f\right)=\rho(x, \chi)W(f) $.
		\end{enumerate}
		
		\end{lemma}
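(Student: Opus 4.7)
The plan is to first verify both identities by direct computation for $f \in C_c(G \times \widehat{G})$, where the defining Bochner integral for $W(f)$ converges absolutely, and then extend to arbitrary $f \in L^p(G \times \widehat{G})$ by a density argument.

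For part (i), I would fix $\varphi \in L^2(G)$ and $y \in G$, write $(a,\alpha)$ for the translation parameter to avoid a variable clash, and expand $(W(T^t_{(a,\alpha)} f)\varphi)(y)$ by substituting the definitions of $W$, $\rho$, and $T^t$. The substitution $u = xa^{-1}$, $\eta = \chi\overline{\alpha}$ in the $G \times \widehat{G}$-integral, which is legitimate by translation invariance of the Haar measure, strips the twist off $f$ and leaves a product of characters together with a factor $\varphi(uay)$. Collecting the characters $\eta(y)\alpha(u)\alpha(y)$ and using commutativity of $G$ to replace $uay$ by $auy$ inside $\varphi$, the integrand becomes $f(u,\eta)\rho(u,\eta)[\rho(a,\alpha)\varphi](y)$, whose integral is $(W(f)\rho(a,\alpha)\varphi)(y)$.

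For part (ii), the same change of variables is applied to $(W(L^t_{(a,\alpha)} f)\varphi)(y)$. This time the twist factor $(\chi\overline{\alpha})(a)$ contributed by $L^t$ becomes $\eta(a)$ after substitution, and combines with $\chi(y) = \eta(y)\alpha(y)$ from $\rho$ into the pattern $\eta(ay)\alpha(y)$. The character $\alpha(y)$ pulls outside the integral while the remainder is precisely $(W(f)\varphi)(ay)$, yielding $\alpha(y)(W(f)\varphi)(ay) = (\rho(a,\alpha)W(f)\varphi)(y)$.

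The extension to $L^p$ is routine: choose $f_n \in C_c(G \times \widehat{G})$ with $f_n \to f$ in $L^p$, note that both $T^t_{(a,\alpha)}$ and $L^t_{(a,\alpha)}$ are $L^p$-isometries, and pass to the limit in the operator-norm topology supplied by the preceding theorem (in $\mathcal{B}_\infty$ for $p=1$, in $\mathcal{B}_2$ for $p=2$, in $\mathcal{B}_{p'}$ for $1 < p < 2$), using that left and right composition with the unitary $\rho(a,\alpha)$ is continuous on each Schatten class. For $2 < p < \infty$ the identity is interpreted relative to the density-based definition of $W$ on $L^p$ given in the preliminaries.

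The only real difficulty here is careful bookkeeping of characters and their conjugates through the substitution. The conceptually important point is that commutativity of $G$ is used at exactly the step that identifies $uay$ with $auy$ inside $\varphi$; it is precisely the failure of this step in a noncommutative setting that forces $T^t$ and $L^t$ to be two distinct operators and produces two separate intertwining identities.
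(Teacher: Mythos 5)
Your proposal is correct and follows essentially the same route as the paper: a direct expansion of the defining integral against a fixed $\varphi \in L^2(G)$ followed by the change of variables $(x',\chi')\mapsto(x'x^{-1},\chi'\overline{\chi})$ justified by translation invariance of the Haar measure, with commutativity of $G$ used at exactly the step you identify. The only cosmetic differences are that the paper runs the computation in the opposite direction (starting from $W(f)\rho(x,\chi)$ and arriving at $W(T^t_{(x,\chi)}f)$), leaves part (ii) to the reader, and leaves the density extension to $L^p$ implicit, all of which your write-up handles explicitly and correctly.
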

		\begin{proof} 
		For any $\varphi \in L^2(G)$, we have
		\begin{align*} (W(f)\rho(x,\chi))(\varphi)(y)=& 
			\int_{G \times \widehat{G}}f(x',\chi') (\rho(x',\chi'))(\rho(x,\chi)\varphi)(y)dx' d\chi' \\
			=& \int_{G \times \widehat{G}}f(x',\chi') \chi'(y)\left(\rho(x,\chi)\varphi\right)(x'y)dx' d\chi'\\
			=& \int_{G \times \widehat{G}}f(x',\chi') \chi'(y)\chi(x'y)\varphi(xx'y)dx' d\chi' .\end{align*}
		Consider the map $(x',\chi') \to (x'x^{-1},\chi' \overline{\chi}).$ Since  the measure on  $G \times \widehat{G}$ is Haar measure, we have 
		\begin{align*}
			(W(f)\rho(x,\chi))(\varphi)(y)=&  \int_{G \times \widehat{G}} f(x'x^{-1},\chi' \overline{\chi}) \left( \chi' \overline{\chi}\right)(y) \chi(x'x^{-1}y)\varphi(x'y)dx' d\chi'\\
			=& \int_{G \times \widehat{G}} f(x'x^{-1},\chi' \overline{\chi}) \chi'(y) \chi(x')\overline{\chi}(x)\varphi(x'y)dx' d\chi' \\
			=& \int_{G \times \widehat{G}}\left(f(x'x^{-1},\chi' \overline{\chi}) \chi(x')  \overline{\chi}(x) \right ) \chi'(y)\varphi(x'y) dx' d\chi' \\=& \int_{G \times \widehat{G}} \left( T^t_{(x,\chi)}f \right)(x',\chi')(\rho(x',\chi'))(\varphi)(y)dx' d\chi'  \\=& (W(T^t_{(x,\chi)}f))(\varphi)(y).\end{align*}
		Similarly, one can prove $(ii)$.
		\end{proof}
		The existence of a bounded approximate identity in $L^1(\gtg)$ can be proved as in \cite[Proposition 2.44]{FolAHa}.

		Let $A$ be a Banach space and let $A^*$ denote its Banach space dual. For $1\leq p < \infty$, let $L^p(G;A)$ denotes the space of all $p$-Bochner integrable function from $G$ to $A$, see  \cite{HTJM, MR2610316}.   
	For any complex Banach algebra $A$, the Weyl transform of a Bochner integrable function can be seen as a $\cb(L^2(G; A))$-valued bounded map on $L^p(\gtg; A), \  1 \leq p \leq 2.$  Similarly, the Weyl transform for any vector measure $\nu \in M(\gtg;A)$ can also be defined. Since $A$ is a Banach algebra, the concepts of twisted convolution and twisted translation can be defined naturally for Bochner integrable functions on $\gtg$.

  	The following lemma will be used to study the product between functionals in $A^*$ and functionals in $A^{**}$ and define an algebra structure in $A^{**}$. We will denote the canonical image of $a \in A$ inside $A^{**}$ by $a$ itself.
	\begin{lemma}[\cite{aren}]
		Let $A$ be a Banach algebra. Let $a,b \in A, f \in$ $A^*, F, G \in A^{**}$.
		Then $A^{**}$ is a Banach algebra under the Arens product,  denoted by $\odot$, defined as follows.
		\begin{enumerate}[(i)]
			\item 	Define $ a \odot f \in  A^*$ as $	(a \odot f)(b)  =f(ba). $ 
			\item	Define $f \odot F \in A^*$ as $(f \odot F)(a)  =F(a \odot f)$.
			\item Define $F \odot G \in A^{* *}$ as $(F \odot G)(f)  =G(f \odot F).$
		\end{enumerate}
	\end{lemma}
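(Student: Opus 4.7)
The plan is to verify, in order, that each of the three constructions in (i)--(iii) produces a bounded linear functional of the declared type with the appropriate operator-norm bound, that the resulting map $\odot : A^{**} \times A^{**} \to A^{**}$ is bilinear and satisfies the submultiplicative bound $\|F \odot G\| \le \|F\|\|G\|$, and finally that $\odot$ is associative. Everything except associativity should be a direct propagation of boundedness and linearity through the three dual layers, with no analytic input needed beyond the definition of the dual norm.

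For well-definedness, I would observe that in (i) linearity of $b \mapsto f(ba)$ in $b$ uses left-linearity of multiplication in $A$, and $|(a \odot f)(b)| \le \|f\|\|a\|\|b\|$ gives $a \odot f \in A^*$ with $\|a \odot f\| \le \|a\|\|f\|$; moreover $a \mapsto a \odot f$ is itself a bounded linear map $A \to A^*$. This lets (ii) go through: $a \mapsto F(a \odot f)$ is linear in $a$ with norm at most $\|F\|\|f\|$, so $f \odot F \in A^*$ and $f \mapsto f \odot F$ is bounded linear $A^* \to A^*$ of norm at most $\|F\|$. Step (iii) is then analogous, yielding $F \odot G \in A^{**}$ with $\|F \odot G\| \le \|F\|\|G\|$. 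Bilinearity of $\odot$ on $A^{**}$ follows by chasing linearity through all three steps.

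The heart of the argument is associativity. Fix $E, F, G \in A^{**}$ and $f \in A^*$. Expanding $((E \odot F) \odot G)(f)$ and $(E \odot (F \odot G))(f)$ via (iii) gives
\[
((E \odot F) \odot G)(f) = G\bigl(f \odot (E \odot F)\bigr), \qquad (E \odot (F \odot G))(f) = G\bigl((f \odot E) \odot F\bigr),
\]
so it suffices to prove the $A^*$-identity $f \odot (E \odot F) = (f \odot E) \odot F$. Evaluating both sides at $a \in A$ and unwinding (ii) and (iii) reduces this in turn to $(a \odot f) \odot E = a \odot (f \odot E)$ in $A^*$. Evaluating at $b \in A$ and unwinding (i), (ii) reduces it to $b \odot (a \odot f) = ba \odot f$ in $A^*$; finally, evaluating at $c \in A$ and using (i) twice shows both sides equal $f(cba)$. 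This last step is just the associativity of multiplication in $A$.

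The main obstacle is purely notational: the three module actions are defined asymmetrically, and one must take care to keep track of which action is being applied at each step and in which order evaluation occurs. No topological or approximation arguments enter — once boundedness has been established at each dual layer, associativity of $\odot$ on $A^{**}$ is nothing more than the propagation of associativity in $A$ through three successive dualities.
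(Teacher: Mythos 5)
Your proof is correct: the boundedness estimates $\|a \odot f\| \le \|a\|\|f\|$, $\|f \odot F\| \le \|f\|\|F\|$, $\|F \odot G\| \le \|F\|\|G\|$ all check out, and the associativity chain reducing $((E \odot F)\odot G)(f) = (E \odot (F\odot G))(f)$ down to $f(cba)=f(cba)$ is exactly the standard Arens argument. The paper gives no proof at all here --- it simply cites Arens's original result --- so there is nothing to compare against; your verification is the canonical one and fills in the omitted details faithfully.
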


		For $1 \leq p,q < \infty$, Mauceri in \cite{Mau} defined $L^p$-$L^q$ Weyl multiplier as a bounded operator $M \in \mathcal{B}(L^2(G))$ such that the operator $C_M $ defined  by $W(C_Mf)=MW(f)$  extends to bounded operator from $L^p(G \times \widehat{G})$ to $L^q(G \times \widehat{G})$. The following characterization of $L^1$-$L^p$ multipliers, for  $1 \leq p< \infty$, was obtained in \cite{RsNK}.
		\begin{theorem} \label{Weyl m}
			Let $T$ be a bounded operator from $L^1(G \times \widehat{G})$ to $L^p(G \times \widehat{G})$, $1 \leq p < \infty$ . Then the following are equivalent:		\begin{enumerate}			\item[(i)] $T$ commutes with twisted translations.
				\item[(ii)] $T(f \times g)=T(f )\times g$ for all $f, g \in L^1(G \times \widehat{G})$.
				\item[(iii)] there exists a unique measure $\mu \in M(G \times \widehat{G}) $ for $p=1$ and $\mu \in L^p(\gtg)$ for $p>1$ such that $ T(f)=\mu \times f $ for all $f \in L^1(G \times \widehat{G})$.
			\end{enumerate}
			Also, for $1 \leq p \leq 2$, the above three are equivalent to the following:
			\begin{enumerate}
				
				\item[(iv)] there exists a unique measure $\mu \in M(G \times \widehat{G}) $  for $p=1$ and $\mu \in L^p(\gtg)$ for $p>1$ such that $ W(T(f))=W(\mu) W(f) $ for all $f \in L^1(G \times$ $\widehat{G})$.
				\item[(v)] there  exists a unique operator $M \in \mathcal{B}\left(L^2(G)\right) $ for $p=1$ and $M \in \mathcal{B}_{p'}\left(L^2(G)\right) $  such that $  W(T(f))=MW(f)  $ for all $f \in L^1(G \times \widehat{G})$.	
			\end{enumerate}
		\end{theorem}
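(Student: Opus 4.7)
The plan is to prove the equivalences as the cycle $(\mathrm{iii}) \Rightarrow (\mathrm{i}) \Rightarrow (\mathrm{ii}) \Rightarrow (\mathrm{iii})$, then pass to the Weyl side to obtain (iv) and (v) for $1 \leq p \leq 2$. The directions from (iii) are routine: if $T(f) = \mu \times f$, then associativity of the twisted convolution immediately gives $T(f \times g) = T(f) \times g$, while a change of variables shows that $T^t_{(x,\chi)}(\mu \times f) = \mu \times T^t_{(x,\chi)} f$, so $T$ commutes with twisted translation. For $(\mathrm{i}) \Rightarrow (\mathrm{ii})$, I would apply $T$ inside the Bochner integral
\[
f \times g = \int_{\gtg} (T^t_{(x',\chi')} f) \, g(x',\chi') \, dx' d\chi'
\]
and use the continuity of $T$ together with its commutation with $T^t_{(x',\chi')}$.

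For the crux $(\mathrm{ii}) \Rightarrow (\mathrm{iii})$, I would use a bounded approximate identity $\{e_\alpha\}$ in $L^1(\gtg)$. Boundedness of $T$ yields $\|T(e_\alpha)\|_p \leq C$, and since $e_\alpha \times f \to f$ in $L^1$, hypothesis (ii) gives
\[
T(e_\alpha) \times f = T(e_\alpha \times f) \xrightarrow{L^p} T(f)
\]
for every $f \in L^1(\gtg)$. For $p = 1$, I would regard $\{T(e_\alpha)\}$ as a bounded net in $M(\gtg) = C_0(\gtg)^*$ and extract a weak-$*$ cluster point $\mu$; for $1 < p < \infty$, reflexivity of $L^p(\gtg)$ yields a weakly convergent subnet with limit $\mu \in L^p(\gtg)$. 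Testing against $C_c(\gtg)$ (respectively $L^{p'}(\gtg)$) one checks that $T(e_\alpha) \times f$ converges to $\mu \times f$ in the same weak topology, and matching this with the norm limit $T(f)$ identifies $T(f) = \mu \times f$. Uniqueness follows from injectivity of $\mu \mapsto \mu \times f$ on a dense set.

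For the Weyl-transform side in the range $1 \leq p \leq 2$, the implication $(\mathrm{iii}) \Rightarrow (\mathrm{iv})$ is immediate from the homomorphism identity $W(\mu \times f) = W(\mu) W(f)$, and $(\mathrm{iv}) \Rightarrow (\mathrm{v})$ sets $M = W(\mu)$. The converse $(\mathrm{v}) \Rightarrow (\mathrm{iv})$ uses Plancherel's theorem, which identifies $\mathcal{B}_2(L^2(G))$ with $W(L^2(\gtg))$, together with Hausdorff--Young to recover $\mu \in L^p(\gtg)$ from $M \in \mathcal{B}_{p'}(L^2(G))$ when $1 < p \leq 2$; the case $p = 1$ falls back on injectivity of $W$ on $M(\gtg)$.

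The main obstacle lies in the identification step in $(\mathrm{ii}) \Rightarrow (\mathrm{iii})$: the convergence $T(e_\alpha) \to \mu$ is only weak (or weak-$*$), while $T(e_\alpha) \times f \to T(f)$ is norm convergence. Reconciling these requires showing that twisted convolution against a fixed $f \in L^1(\gtg)$ is continuous from the weak-$*$ topology of $M(\gtg)$ (or the weak topology of $L^p$) into the weak topology of $L^p(\gtg)$, which in turn rests on the $L^1$-module structure of $L^p(\gtg)$ under twisted convolution and the continuity of twisted translation in $L^{p'}$.
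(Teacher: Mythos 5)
First, a remark on the comparison itself: the paper does not prove this theorem --- it is quoted verbatim from the reference [RsNK] (``The following characterization \dots was obtained in \cite{RsNK}''), so there is no in-paper proof to match against. Judged on its own merits, your cycle $(iii)\Rightarrow(i)\Rightarrow(ii)\Rightarrow(iii)$ is the standard and correct route: pulling $T$ inside the Bochner integral for $(i)\Rightarrow(ii)$, and the approximate-identity/weak-$*$ (resp.\ weak) compactness argument for $(ii)\Rightarrow(iii)$, including your correct identification of the topology-matching issue as the technical crux, are all sound.

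The genuine gap is in your treatment of $(v)\Rightarrow(iv)$. You propose to ``recover $\mu\in L^p(G\times\widehat{G})$ from $M\in\mathcal{B}_{p'}(L^2(G))$'' via Hausdorff--Young, but Hausdorff--Young only asserts that $W$ maps $L^p$ \emph{into} $\mathcal{B}_{p'}$ for $1<p<2$; it is not surjective (exactly as the Fourier transform $L^p\to \ell^{p'}$ is not onto), so membership of $M$ in $\mathcal{B}_{p'}$ does not produce a preimage $\mu$. Likewise, for $p=1$ the injectivity of $W$ on $M(G\times\widehat{G})$ gives uniqueness of a preimage but says nothing about existence: a general $M\in\mathcal{B}(L^2(G))$ need not lie in $W(M(G\times\widehat{G}))$. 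The hypothesis you must exploit is not $M\in\mathcal{B}_{p'}$ alone but the intertwining identity $W(Tf)=MW(f)$. The standard repair is to prove $(v)\Rightarrow(ii)$ directly: for $f,g\in L^1(G\times\widehat{G})$,
\begin{equation*}
W\bigl(T(f\times g)\bigr)=MW(f\times g)=MW(f)\,W(g)=W(Tf)\,W(g)=W(Tf\times g),
\end{equation*}
and injectivity of $W$ on $L^p(G\times\widehat{G})$ for $1\leq p\leq 2$ yields $T(f\times g)=Tf\times g$; the already-established chain $(ii)\Rightarrow(iii)\Rightarrow(iv)$ then closes the loop, with uniqueness of $\mu$ in $(iv)$--$(v)$ inherited from $(iii)$ together with injectivity of $W$. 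This is in fact exactly how the present paper handles the corresponding implication $(vi)\Rightarrow(iii)$ in its vector-valued Theorem of Section 3, so the fix is consistent with the authors' own method.
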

		Using this characterization, any of the above statements can be used to define Weyl multipliers. Motivated by this, we define the concept of Weyl multiplier for vector-valued functions as follows.
		\begin{definition}
			Let $T: L^1(G \times \widehat{G};A) \to L^p(G \times \widehat{G};A)$ be a continuous linear operator, where $1 \leq p < \infty$. We say that $T$ is a Weyl multiplier if it satisfies the following conditions.
			\begin{enumerate}
				\item[(i)] $TT^t_{(x, \chi)}=T^t_{(x, \chi)}T $ for all $(x,\chi) \in G \times \widehat{G}$,
				\item[(ii)] $T(fa)=T(f)a$ for all $a \in A, f \in L^1(G \times \widehat{G};A) $.
			\end{enumerate}
			
		\end{definition}
			\begin{proposition}
			Let $f \in L^1(G \times \widehat{G};A)$ and $g \in L^{p}(G \times \widehat{G};A)$, $1 \leq p < \infty$,  then for all $(x,\chi) \in G \times \widehat{G}$, we have \label{lr1}
			\begin{equation} \label{lr}
				T^t_{(x, \chi)}(f \times g)= f \times T^t_{(x, \chi)}g. 
			\end{equation}
		\end{proposition}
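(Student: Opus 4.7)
The plan is to prove this by a direct computation, reducing both sides to the same integral via the definitions of twisted translation and twisted convolution. The key point is that although $f$ and $g$ are vector-valued, the characters appearing as multiplicative factors are scalars, so the scalar-valued algebraic identity carries over verbatim to the Bochner setting.

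First I would unfold the left-hand side. By definition of $T^t_{(x,\chi)}$,
\[
T^t_{(x,\chi)}(f \times g)(u,\eta) = (f \times g)\bigl(u x^{-1}, \eta\overline{\chi}\bigr)\, \chi(u x^{-1}).
\]
Expanding the twisted convolution and pulling the scalar $\chi(u x^{-1})$ inside the Bochner integral gives an integrand
\[
\chi(u x^{-1})\, f(x',\chi')\, g\bigl(u x^{-1}(x')^{-1}, \eta\overline{\chi}\,\overline{\chi'}\bigr)\, (\eta\overline{\chi}\,\overline{\chi'})(x').
\]
Next I would unfold the right-hand side. By the definition of the twisted convolution applied with the function $T^t_{(x,\chi)}g$ in the second slot,
\[
f \times T^t_{(x,\chi)}g(u,\eta) = \int_{\gtg} f(x',\chi')\,(T^t_{(x,\chi)}g)\bigl(u (x')^{-1}, \eta\overline{\chi'}\bigr)\,(\eta\overline{\chi'})(x')\, dx'\, d\chi',
\]
and then
\[
(T^t_{(x,\chi)}g)\bigl(u(x')^{-1}, \eta\overline{\chi'}\bigr) = g\bigl(u(x')^{-1}x^{-1}, \eta\overline{\chi'}\,\overline{\chi}\bigr)\, \chi\bigl(u(x')^{-1}x^{-1}\bigr).
\]

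The core of the proof is then to check that the two integrands agree pointwise. Since $G$ is abelian, $ux^{-1}(x')^{-1} = u(x')^{-1}x^{-1}$, so the vector-valued factor $g(\cdot,\cdot)$ is the same on both sides. What remains is the scalar identity
\[
\chi(u x^{-1})\,(\eta\overline{\chi}\,\overline{\chi'})(x') \;=\; \chi\bigl(u(x')^{-1}x^{-1}\bigr)\,(\eta\overline{\chi'})(x'),
\]
which, after canceling the common factor $\chi(u)\overline{\chi}(x)\eta(x')\overline{\chi'}(x')$, reduces to $\overline{\chi}(x') = \chi((x')^{-1})$, true because $\chi$ is a character. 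Equality of the integrands then gives equality of the Bochner integrals.

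The main potential obstacle is purely notational, since the computation involves several characters and needs the substitution/commutativity steps to be tracked carefully; there is no analytic obstruction, because the characters are scalars and thus pass freely in and out of the Bochner integral, and the integrability on both sides is guaranteed by $f \in L^1(\gtg;A)$, $g \in L^p(\gtg;A)$, and the fact that twisted translation is an isometry of $L^p(\gtg;A)$. Thus the lemma is reduced to the algebraic identity above, valid on abelian $G$.
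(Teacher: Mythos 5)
Your proof is correct and takes essentially the same approach as the paper: a direct pointwise computation unfolding the definitions of twisted translation and twisted convolution, with the character identities doing all the work. The only cosmetic difference is that you expand both sides using the second (``$L^t$'') form of the convolution and match integrands directly, whereas the paper uses the first form and a Haar-measure change of variables $(x'',\chi'')\mapsto(x''x^{-1},\chi''\overline{\chi})$; both reduce to the same scalar character identity.
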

		\begin{proof} 
			For $f \in L^1(G \times \widehat{G};A)$ and $g \in L^{p}(G \times \widehat{G};A)$, we have
			\begin{align*}
				T^t_{(x,\chi)}(f \times g)(x',\chi')=& f \times g (x'x^{-1},\chi'\overline{\chi}) \chi(x'x^{-1}) \\
				=& \int_{G \times \widehat{G}} f (x'x^{-1}x''^{-1},  \chi' \overline{\chi\chi''}) \chi''(x'x^{-1}x''^{-1}) g(x'',\chi'') \chi(x'x^{-1}) dx'' d \chi'' \\
				=& \int_{G \times \widehat{G}} f(x'x''^{-1}, \chi'\overline{\chi''}) \chi''\overline{\chi}(x'x''^{-1}) g(x''x^{-1},\chi''\overline{\chi})  \chi(x'x^{-1}) dx'' d \chi'' \\
				=& \int_{G \times \widehat{G}} f(x'x''^{-1}, \chi'\overline{\chi''}) \chi''(x'x''^{-1}) g(x''x^{-1},\chi''\overline{\chi})  \chi(x''x^{-1}) dx'' d \chi'' \\
				=& \int_{G \times \widehat{G}} f(x'x''^{-1}, \chi'\overline{\chi''}) \chi''(x'x''^{-1}) T^t_{(x,\chi)} g(x'', \chi'') dx'' d \chi'' \\ =& f \times T^t_{(x,\chi)}g (x', \chi'). \qedhere
			\end{align*}
		\end{proof}


		\section{Characterization of $L^1(G \times \widehat{G}; A)$ Weyl multipliers} In this section, we will study the vector-valued version of the $L^1$-multipliers.
		We will start by discussing an algebra structure in $M(\gtg; A)$. 	The following result follows as in the classical case; see \cite{White}.

		\begin{lemma} 
			The space
			$M(\gtg;A)$ becomes a Banach algebra under the twisted convolution defined as
			
				$$\mu \times \nu (f)=\int_{G  \times \widehat{G}} \int_{G  \times \widehat{G}} f \left(xx',\chi\chi'\right)  \chi'(x) d \mu(x,\chi)d \nu(x',\chi ').$$
			Also, the Banach algebra $L^1(\gtg;A)$ is an ideal in $M(\gtg;A)$. 
		\end{lemma}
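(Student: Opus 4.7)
The plan is to mirror the classical argument for $M(G)$ (cf.\ \cite{White}), adapting the Bochner-integration bookkeeping to the vector-valued setting and paying close attention to the coupling character $\chi'(x)$. There are two main tasks: (a) show that the given formula defines an associative bounded product on $M(\gtg; A)$; and (b) establish that $L^1(\gtg; A)$ sits inside as a two-sided ideal.

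For (a), I would first verify that the iterated integral is well-defined. For each $f \in C_0(\gtg)$ and each fixed $(x',\chi')$, the map $(x,\chi)\mapsto f(xx',\chi\chi')\chi'(x)$ is a bounded continuous scalar function, so its integral against the $A$-valued measure $\mu$ lies in $A$ and depends continuously on $(x',\chi')$; the outer $d\nu$-integration is then performed using the Banach algebra product in $A$ (with the convention that the $d\mu$-value appears on the left of the $d\nu$-value). The standard estimate for vector-valued iterated integrals yields
\[
\|\mu \times \nu(f)\|_A \leq \|f\|_\infty \, \|\mu\|\,\|\nu\|,
\]
and the vector-valued Riesz representation theorem for continuous maps $C_0(\gtg) \to A$ identifies $\mu \times \nu$ with a unique element of $M(\gtg; A)$ of norm at most $\|\mu\|\|\nu\|$. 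For associativity, I would expand both $((\mu \times \nu) \times \sigma)(f)$ and $(\mu \times (\nu \times \sigma))(f)$ from the definition, apply Fubini for vector-valued iterated integrals, and use the character identity $\chi''(xx') = \chi''(x)\chi''(x')$; both sides then reduce to
\[
\int\int\int f(xx'x'', \chi\chi'\chi'')\,\chi'(x)\chi''(x)\chi''(x')\, d\mu(x,\chi)\, d\nu(x',\chi')\, d\sigma(x'',\chi''),
\]
with the innermost-to-outermost order of the three $A$-valued measure integrations preserved on both sides, so the non-commutativity of $A$ causes no problem.

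For (b), I would realise $g \in L^1(\gtg; A)$ as the vector measure $g\,d\lambda \in M(\gtg; A)$, where $\lambda$ is the Haar measure on $\gtg$. Substituting this into the definition of $\mu \times (g\,d\lambda)$, applying Fubini, and performing the Haar-invariant change of variables $x' \mapsto x^{-1}x'$, $\chi' \mapsto \overline{\chi}\chi'$ inside the $d\lambda$-integral bring $(\mu \times g)(f)$ into the form $\int f(y)\,h(y)\,d\lambda(y)$ for an explicit $A$-valued function $h$, whose norm estimate $\|h\|_1 \leq \|\mu\|\,\|g\|_1$ is again a direct consequence of Fubini together with $|\chi'(x)|=1$. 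Thus $\mu \times g$ is absolutely continuous with respect to $\lambda$ and lies in $L^1(\gtg;A)$; placing $g\,d\lambda$ on the left of $\mu$ and repeating the argument handles $g \times \mu$, completing the ideal assertion.

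The main obstacle will be making the vector-valued iterated integrations rigorous in the simultaneous presence of the coupling character $\chi'(x)$, which is absent from classical convolution and genuinely intertwines the $d\mu$ and $d\nu$ variables, and the non-commutativity of $A$, which forbids any reordering of the measure integrations. These difficulties are handled by separating the scalar character factor from the $A$-valued measures and invoking a vector-valued form of Fubini; once this framework is in place, the algebraic and norm identities go through essentially as in the scalar case.
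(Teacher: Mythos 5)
The paper gives no argument for this lemma beyond the remark that it ``follows as in the classical case'' together with a citation of the classical reference, and your plan --- transplanting the classical $M(G)$ construction while tracking the coupling character $\chi'(x)$ and the left/right order of the two $A$-valued integrations --- is exactly that adaptation; your associativity computation (both triple products reduce to the same iterated integral with density $\chi'(x)\chi''(x)\chi''(x')$) and the change-of-variables argument for the ideal property are correct.

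The one step that does not survive as literally written is the appeal to ``the vector-valued Riesz representation theorem for continuous maps $C_0(\gtg)\to A$.'' For a general Banach algebra $A$ (and the paper assumes only a bounded approximate identity, not reflexivity), a bounded linear operator $C_0(\gtg)\to A$ is represented, via Bartle--Dunford--Schwartz, only by an $A^{**}$-valued measure of bounded \emph{semivariation}; it need not come from an element of $M(\gtg;A)$, i.e.\ an $A$-valued regular Borel measure of bounded \emph{variation}. So defining $\mu\times\nu$ merely as a bounded functional on $C_0(\gtg)$ does not by itself place it in $M(\gtg;A)$. The repair is the one used in the classical vector-valued measure-algebra setting: construct $\mu\times\nu$ directly as a measure by forming the $A$-valued product measure $\mu\otimes\nu$ on $(\gtg)\times(\gtg)$ using the multiplication of $A$ (its variation is dominated by $|\mu|\otimes|\nu|$, whence $\|\mu\otimes\nu\|\le\|\mu\|\,\|\nu\|$), multiplying by the unimodular density $\chi'(x)$, and pushing forward under $((x,\chi),(x',\chi'))\mapsto(xx',\chi\chi')$. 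With $\mu\times\nu$ defined this way, the integral formula in the statement, your norm estimate, the associativity identity, and the ideal computation all go through exactly as you describe.
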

		
		For every  $a \in A$, define an operator $M_a$ on $A$ as \begin{equation} \label{lmda} M_a(b):=ba, b \in A.\end{equation}
	
		Given two Banach spaces $X$ and $Y$, let $X\widehat{\otimes} Y$ denote the projective tensor product of $X$ and $Y$. If $X_i, \ Y_i$ $(i= 1,2)$ are Banach spaces and $T_i:X_i \to Y_i$ $ (i=1,2)$ are bounded linear operators then $T_1\widehat{\otimes} T_2(x_1 \otimes x_2) = T_1(x_1) \otimes T_2(x_2)$ defines a bounded linear operator from $X_1\widehat{\otimes}X_2$ to $Y_1\widehat{\otimes} Y_2 $.

		\begin{theorem}
			Let $A$ be a Banach algebra with a bounded approximate identity. Let $T: L^1(G \times \widehat{G};A) \to L^1(G \times \widehat{G};A)$ be a continuous linear operator. Then the following are equivalent:
			\begin{enumerate}[(i)]
				\item $T$ is a  Weyl multiplier.
				\item $	T  (M_a \wo T^t_{(x,\chi )}) =(M_a \wo T^t_{(x,\chi )})  T  \text{ for all }  a \in A \text{ and } (x,\chi) \in G \times \widehat{G}$ where $M_a$ is defined in equation \eqref{lmda}.
				\item $T(f \times g)= Tf \times g$ for all $f,g \in L^1(G \times \widehat{G};A)$.
				\item There exists a measure  $\nu$ in $M(G \times \widehat{G};A^{**})$ such that $Tf= \nu \times f$ for all $f \in L^1(G \times \widehat{G};A)$.	
			\end{enumerate}
			In addition, if $A$ is also reflexive, then the above conditions are equivalent to the following:
			\begin{enumerate}
				\item[(v)] There exists a measure $\nu \in M(G \times \widehat{G};A)$ such that
				$W(Tf)=W(\nu) W(f)$ for all $f \in L^1(G \times \widehat{G};A).$
				\item[(vi)] There exists $M \in \cb(L^2(G;A))$ such that $W(Tf) = M W(f) $ for all $f \in L^1(G \times \widehat{G};A).$
			\end{enumerate}
		\end{theorem}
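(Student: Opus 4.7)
The plan is to prove the equivalence of (i)--(iv) via the cycle (i)$\Rightarrow$(ii)$\Rightarrow$(iii)$\Rightarrow$(iv)$\Rightarrow$(i), and then, under the additional reflexivity hypothesis, to close a second cycle (iv)$\Rightarrow$(v)$\Rightarrow$(vi)$\Rightarrow$(iii). A standing tool throughout is a bounded approximate identity $\{u_\gamma\}$ for twisted convolution on $L^1(\gtg;A)$, built from a bai of $L^1(\gtg)$ (noted in the preliminaries) tensored with a bai of $A$.

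The equivalence (i)$\Leftrightarrow$(ii) is essentially formal: under the identification $L^1(\gtg;A) \cong L^1(\gtg)\wo A$, the operator $M_a \wo T^t_{(x,\chi)}$ sends a pure tensor $\phi \otimes b$ to $T^t_{(x,\chi)}\phi \otimes ba$, which agrees with the composition $T^t_{(x,\chi)}\circ M_a$ on all of $L^1(\gtg;A)$. Commuting with this composition is therefore equivalent to commuting with each factor separately --- in one direction by specializing $a$ to the bai of $A$, or $(x,\chi)$ to the identity element, and in the other by composing. For (i)$\Rightarrow$(iii), I rewrite $f\times g$ as the Bochner integral $\int_{\gtg}(T^t_{(x',\chi')}f)\,g(x',\chi')\,dx'\,d\chi'$ from the preliminaries, pull $T$ under the integral sign by continuity, and apply both defining properties of a Weyl multiplier. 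The converse (iii)$\Rightarrow$(i) is obtained by combining Proposition~\ref{lr1} with the bai argument applied to $g = u_\gamma a$.

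The main step is (iii)$\Rightarrow$(iv), which adapts the classical Chan-type argument to the vector-valued setting. Boundedness of $\{u_\gamma\}$ and continuity of $T$ make $\{Tu_\gamma\}$ norm-bounded in $L^1(\gtg;A) \hookrightarrow M(\gtg;A) \hookrightarrow M(\gtg;A^{**})$. The reason for stepping up to $A^{**}$ is that $M(\gtg;A^{**})$ is isometrically the dual of $C_0(\gtg;A^*)$, so Banach--Alaoglu extracts a weak-$\ast$ cluster point $\nu$ along some subnet. By (iii) and the bai property, $(Tu_\gamma)\times f = T(u_\gamma \times f) \to Tf$ in the $L^1$-norm; on the other hand, I show that $\mu \mapsto \mu \times f$ is weak-$\ast$-to-weak continuous on norm-bounded subsets of $M(\gtg;A^{**})$, verified on elementary test functions $\phi \otimes a^* \in C_0(\gtg)\otimes A^*$ by Fubini applied to the twisted-convolution formula. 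Matching the two limits along the chosen subnet gives $Tf = \nu \times f$. This weak-$\ast$ continuity step, with the Arens-product bookkeeping required to pair $A^{**}$-valued measures against $A^*$-valued test functions through the twisted-convolution kernel, is the principal technical obstacle. The reverse (iv)$\Rightarrow$(i) is immediate from Proposition~\ref{lr1} together with the right $A$-linearity of twisted convolution.

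For the reflexive case, (iv)$\Rightarrow$(v) holds because $A^{**} = A$ places $\nu$ in $M(\gtg;A)$, and the homomorphism identity $W(\nu\times f) = W(\nu)W(f)$ extends from the scalar-algebraic statement by Bochner approximation against test vectors in $L^2(G;A)$; (v)$\Rightarrow$(vi) is immediate by setting $M = W(\nu)$. Finally, (vi)$\Rightarrow$(iii) uses the multiplicative property of $W$ and its injectivity on $L^1(\gtg;A)$: from $W(T(f\times g)) = MW(f)W(g) = W(Tf)W(g) = W(Tf\times g)$, injectivity (which reduces to the scalar case after pairing $\varphi = \psi \otimes a \in L^2(G;A)$ with $a^* \in A^*$) yields (iii), completing the second cycle.
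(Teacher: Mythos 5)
Your proposal is correct and, for most of the chain, follows the same architecture as the paper: the formal verification of (i)$\Rightarrow$(ii), the bounded-approximate-identity/Banach--Alaoglu extraction of $\nu\in M(\gtg;A^{**})\simeq C_0(\gtg;A^*)^*$ with the Arens-product pairing for (iii)$\Rightarrow$(iv), the direct check of (iv)$\Rightarrow$(i), and the reflexive tail (iv)$\Rightarrow$(v)$\Rightarrow$(vi)$\Rightarrow$(iii) are all essentially the paper's steps. The one genuine divergence is how you reach (iii). The paper proves (ii)$\Rightarrow$(iii) by scalarization: for each $a\in A$ and $x^*\in A^*$ it forms the functional $\Phi_{a,x^*}(f)=x^*T(fa)$ on the scalar algebra $L^1(\gtg)$, shows via the approximate identity of $A$ that $\Phi_{a,x^*}$ commutes with twisted translations, invokes the scalar characterization (Theorem \ref{Weyl m}) to get $\Phi_{a,x^*}(f\times g)=\Phi_{a,x^*}(f)\times g$, and then reassembles the vector-valued identity on elementary tensors. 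You instead prove (i)$\Rightarrow$(iii) directly, writing $f\times g$ as the $L^1(\gtg;A)$-valued Bochner integral $\int_{\gtg}T^t_{(x',\chi')}(f)\,g(x',\chi')\,dx'\,d\chi'$, pulling $T$ through the integral, and applying both defining properties of a Weyl multiplier pointwise in $(x',\chi')$. This is sound (continuity of $(x',\chi')\mapsto T^t_{(x',\chi')}f$ into $L^1$ plus strong measurability of $g$ gives Bochner integrability, and bounded operators commute with Bochner integrals), and it buys self-containedness: you never need the scalar multiplier theorem for this implication. What it costs is the explicit justification of the Bochner-integral representation of the twisted convolution and of the pointwise application of $T(ha)=T(h)a$ under the integral sign, which you should spell out; the paper's scalarization outsources exactly this work to the already-established scalar result. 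Your direct proofs of (ii)$\Rightarrow$(i) (specializing $a$ to the bai and $(x,\chi)$ to the identity) and your explicit appeal to injectivity of $W$ in (vi)$\Rightarrow$(iii) are details the paper leaves implicit, and are welcome additions.
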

		\begin{proof}
			$(i) \Rightarrow (ii)$.	Let $T$ be a Weyl multiplier. 
			For all $f \in L^1(G \times \widehat{G};A)$ and $(x, \chi) \in G \times \widehat{G}$, we have
			\begin{align*}
				T  (M_a \wo T^t_{(x,\chi )}) (f)=&  T(  T^t_{(x,\chi )}(f)a)\\=& T(  T^t_{(x,\chi )}(f))a\\=&     T^t_{(x,\chi )}(Tf)a\\=& (M_a \wo  T^t_{(x,\chi )})  T(f).
			\end{align*}
			
			$(ii) \Rightarrow (iii)$. For $a\in A$ and $x^* \in A^*$, define a map 
			$$\Phi_{a,x^*}(f)=x^*T(fa), \quad  f \in L^1(\gtg).$$
			Let $e_\beta$ be the approximate identity of $A$. Then

			\begin{align*}
				x^*((M_{e_\beta} \wo T^t_{(x,\chi)})T(fa)) =  & T^t_{(x,\chi)}(x^*( T(fa)e_\beta)) \\&\to  T^t_{(x,\chi)}(x^*( T(fa)))= T^t_{(x,\chi)}\Phi_{a,x^*} (f).
			\end{align*}
			Also,
			\begin{align*}
				x^*(T (M_{e_\beta} \wo T^t_{(x,\chi)})(fa))=x^*T(T^t_{(x,\chi)}( f a) e_\beta ) \to x^*T( T^t_{(x,\chi)}f a) =\Phi_{a,x^*}(T^t_{(x,\chi)} f).
			\end{align*}
			Thus the map $\Phi_{a,x^*}$ defined above commutes with twisted translations for all $a\in A$ and $x^* \in A^*$, hence by Theorem \ref{Weyl m}, for $f, g \in L^1(\gtg)$ and $a,b \in A$, we have
			\begin{align*}
				x^*(T(fa \times gb))=& x^*(T(f \times gab))=  x^*(T(fab))  \times g  
				\\=&x^*(T\left(M_b \wo T^t_{(e,\boldsymbol 1)}(fa)\right)) \times g=   x^*(T(fa)b) \times g=x^*(T(fa )\times gb) .\end{align*}
			Thus $T(fa \times gb) =T(fa) \times  gb$ and $(iii)$ follows.
			
			$(iii) \Rightarrow (iv)$.
			Let $\{g_\alpha\}$ be a bounded approximate identity of $L^1(G \times \widehat{G};A)$ with $\|g_\alpha\|=1$.  Then for $g \in L^1(G \times \widehat{G};A)$, we have
			\begin{equation}
				T(f ) = \lim_\alpha T(g_\alpha \times f).\label{cwm1}
			\end{equation}
			Also, $\{Tg_\alpha\}$ is a norm bounded subset of  $M(G \times \widehat{G};A^{**})\simeq C_0(\gtg,A^*)^*$. By Banach Alaoglu's theorem, there exists $\nu \in M(G \times \widehat{G};A^{**})$ such that $ \{Tg_\alpha\} \to \nu$ in weak$^*$ topology i.e.
			$$(Tg_\alpha(f))(x^*) \to \nu(f)(x^*)  \text{ for }f \in C_0(\gtg)\text{ and }x^* \in A^*.$$
Now for $a \in A, f,g \in C_\infty (\gtg)$ and each $x^* \in A^*$, we have 
   
			\begin{align*}
			&\left|x^* \int_{\gtg} f d\left(T\left(g_\alpha \times ag \right)\right)-\int_{\gtg} f d(\nu  \times ag)\left(x^*\right)\right|\\
			= & \left|x^* \left( \int_{\gtg} f d\left(T g_\alpha \times ag \right)\right )-\int_{\gtg} f d( \nu \times  ag)\left(x^*\right)\right| \\
			= & \bigg|\int_{\gtg}  \left[\int_{\gtg} f\left(xx',\chi\chi'\right)  \chi'(x)  dx^*\left( T g_\alpha a\right)(x,\chi)\right]  g(x',\chi')d x' d \chi '\\
			& -\int_{\gtg} \left[\int_{\gtg} f\left(xx',\chi\chi'\right)  \chi'(x) d(\nu \odot a)(x,\chi) dx d\chi\right]   \left(x^*\right) g(x',\chi') dx' d \chi'\bigg| \\
			= & \bigg| x^* \int_{\gtg} d (T g_\alpha a)(x,\chi) \int_{\gtg} f\left(xx',\chi\chi'\right)  \chi'(x) g(x',\chi') dx 'd \chi' \\
			& -\int_{\gtg} d(\nu \odot  a)(x,\chi) \int_{\gtg} f\left(xx',\chi\chi'\right)  \chi'(x) g(x',\chi') dx' d \chi' \left(x^*\right) \bigg| \\
			= & \left|x^* \int_{\gtg} h(x',\chi') d ( T g_\alpha a)(x',\chi')-(\nu \odot a)(h)\left(x^*\right)\right| \\
			=& \left| (a \odot x^* )(Tg_\alpha(h))-\nu(h)(a \odot x^*)\right| \to 0,		
		\end{align*}
		where $h(x',\chi')= f\left(xx',\chi\chi'\right)  \chi'(x) g(x',\chi')$ and $(\nu \odot a)(h)=\nu(h) \odot a$.

  Also, since $f \in C_\infty(\gtg)$, we have
			\begin{align*}  \left|  x^* \int_{\gtg} f d\left(T\left(g_\alpha \times ag \right)\right)-x^* \int_{\gtg} f d\left(T\left(a g\right)\right) \right| =&\left| x^*\big(T\left(g_\alpha \times ag \right)(f)-T\left(a g\right)(f)\big)\right| \\ \leq & \|x^*\|  \|f\|_\infty \|T(g_\alpha \times ag -ag)\|_1.
\end{align*}

Now, by using equation \eqref{cwm1}, we have	
			
			$$x^* \int_{\gtg} f d\left(T\left( g_\alpha \times a g \right)\right) \to x^* \int_{\gtg} f d\left(T\left(a g\right)\right).$$
			Therefore  for $a \in A, f,g \in C_\infty (\gtg)$ and each $x^* \in A^*$
			$$x^* \int_{\gtg} f d\left(T\left(a g\right)\right)=\int_{\gtg} f d(\nu \times ag )\left(x^*\right).$$

			Thus $Tf= \nu \times f $ for all $f \in L^1(\gtg;A)$.

			$(iv) \Rightarrow (i).$ For all $a \in A$ and $f \in L^1(\gtg;A)$ we have
			$$T(fa)= \nu \times fa =(\nu \times f)a= T(f)a.$$
			Similar to Proposition \ref{lr1},  it can be shown that
			$$T(T^t_{(x,\chi)})(f)= \nu \times  T^t_{(x,\chi)}f  =T^t_{(x,\chi)}(\nu  \times f)= T^t_{(x,\chi)}T(f)$$
			for all $(x,\chi) \in \gtg$. Thus $T$ is a Weyl multiplier.	
			
			$(iv) \Rightarrow (v)$. If $A$ is reflexive, there exists a measure $\nu \in M(G \times \widehat{G};A)$ such that $Tf= \nu \times f$ for all $f \in L^1(G \times \widehat{G};A)$. Then $W(Tf)=W(\nu \times f)= W(\nu) W(f)$.\\
			$(v) \Rightarrow (vi)$ is obvious by taking $M=W(\nu)$. \\
			$(vi) \Rightarrow (iii) $. It can be seen that for all $f,g \in L^1(G \times \widehat{G}; A)$, we have $$ W(T(f \times g))=MW(f \times g)=MW(f) W (g)=W(Tf)W(g)= W(Tf \times g).$$  Therefore $T(f \times g) = Tf \times g$ as required.
		\end{proof}
		
			\section{Characterization of $L^1(G \times \widehat{G}; A)$-$L^p(G \times \widehat{G}; A)$ Weyl multipliers}  In this section, we will study the vector-valued version of the $L^1$-$L^p$ Weyl multipliers under the assumption that $A$ is commutative Banach algebra with bounded approximate identity.
			
			For $1 \leq p < \infty,$ any function $g \in L^{p'}(\gtg; A^*)$ defines a linear functional $\phi_g \in\left(L^p(\gtg ; A)\right)^*$ by the formula
		$$
		\left\langle f, \phi_g\right\rangle:=\int_{\gtg}\langle f(x,\chi), g(x,\chi)\rangle dx d \chi, f \in L^p(\gtg ; A) .
		$$
		Recall that a set $Y \subseteq  A^*$ is said to be norming for $A$ if $\underset{f \in Y  \setminus \{0\}}{\sup} \frac{|f(x)|}{\|f\|_{A^*}}=\|x\|_A.$
		
		\begin{proposition} \label{dual}
			Let $1 \leq p < \infty$. The mapping $g \mapsto \phi_g$ defines an isometry of $L^{p'}(\gtg ; A^*)$ onto a closed subspace of $\left(L^p(\gtg ; A)\right)^*$ which is norming for $L^p(\gtg ; A)$.
		\end{proposition}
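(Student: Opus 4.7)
The plan is to combine the vector-valued H\"older inequality with a Hahn--Banach construction on simple functions. H\"older's inequality gives the upper bound $\|\phi_g\| \leq \|g\|_{L^{p'}(\gtg;A^*)}$ and the well-definedness of the map $g \mapsto \phi_g$; a Hahn--Banach argument on a simple function approximation then provides both the matching lower bound (showing that the map is an isometry) and the norming property. Closedness of the image in $(L^p(\gtg;A))^*$ follows automatically, since it is the isometric image of a Banach space.

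Concretely, for $f \in L^p(\gtg;A)$ and $g \in L^{p'}(\gtg;A^*)$, strong measurability renders $(x,\chi) \mapsto \langle f(x,\chi), g(x,\chi)\rangle$ scalar measurable, and the pointwise bound $|\langle f(x,\chi), g(x,\chi)\rangle| \leq \|f(x,\chi)\|_A \|g(x,\chi)\|_{A^*}$ together with the scalar H\"older inequality gives
\[ |\langle f, \phi_g\rangle| \leq \|f\|_{L^p(\gtg;A)} \, \|g\|_{L^{p'}(\gtg;A^*)}, \]
so $\phi_g \in (L^p(\gtg;A))^*$ with $\|\phi_g\| \leq \|g\|_{L^{p'}(\gtg;A^*)}$, and linearity in $g$ is immediate. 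For the matching lower bound and the norming property, which I would prove simultaneously, fix $f \in L^p(\gtg;A)$ and $\varepsilon > 0$, and approximate $f$ in $L^p$-norm by a simple function $\tilde f = \sum_{i=1}^N a_i \mathbf{1}_{E_i}$ with disjoint $E_i$ of finite positive measure. Applying the Hahn--Banach theorem, select $x_i^* \in A^*$ with $\|x_i^*\|_{A^*} = 1$ and $x_i^*(a_i) \geq (1-\varepsilon)\|a_i\|_A$, and set $g := \sum_i \|a_i\|_A^{p-1}\, x_i^* \, \mathbf{1}_{E_i}$ in the case $1 < p < \infty$ (for $p = 1$ one instead retains a single index on which $\|a_i\|_A$ is near-maximal, to avoid the $p' = \infty$ degeneracy). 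The identity $(p-1)p' = p$ then yields $\|g\|_{L^{p'}(\gtg;A^*)}^{p'} = \|\tilde f\|_{L^p(\gtg;A)}^p$ and $\langle \tilde f, \phi_g\rangle \geq (1-\varepsilon)\|\tilde f\|_{L^p(\gtg;A)}^p$, which rearranges to
\[ |\langle \tilde f, \phi_g\rangle| \geq (1-\varepsilon) \|\tilde f\|_{L^p(\gtg;A)} \, \|g\|_{L^{p'}(\gtg;A^*)}. \]
Letting $\tilde f \to f$ and $\varepsilon \to 0$ delivers the norming property, while a symmetric construction that begins instead with a simple approximation of $g \in L^{p'}(\gtg;A^*)$ (and chooses unit vectors $a_i \in A$ with $|x_i^*(a_i)| \geq (1-\varepsilon)\|x_i^*\|$ from the definition of the dual norm) yields $\|\phi_g\| \geq \|g\|_{L^{p'}(\gtg;A^*)}$, completing the isometry.

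The main technical point to watch is the simple function approximation in $L^{p'}(\gtg;A^*)$ without assuming separability of $A^*$: one has to invoke the fact that Bochner strongly measurable functions have essentially separable range, which is what allows $g$ to be approximated in norm by simple $A^*$-valued functions on sets of finite measure. Once this is in hand, the Hahn--Banach step on each piece $E_i$ is a finite-dimensional selection and the remainder is a routine (though exponent-sensitive) adaptation of the classical duality computation for scalar $L^p$.
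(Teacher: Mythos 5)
The paper states this proposition without proof, treating it as a standard duality fact about Bochner spaces (cf.\ the references it gives for vector-valued $L^p$), so there is no in-paper argument to compare against. Your outline is the standard computation and, for $1<p<\infty$, it is correct: H\"older gives $\|\phi_g\|\le\|g\|_{L^{p'}(\gtg;A^*)}$, the Pettis measurability theorem justifies both the measurability of the scalar pairing and the simple-function approximations, and the two dual constructions $g=\sum_i\|a_i\|_A^{p-1}x_i^*\mathbf{1}_{E_i}$ and $f=\sum_i\|x_i^*\|_{A^*}^{p'-1}a_i\mathbf{1}_{E_i}$ deliver, respectively, the norming property and the reverse inequality $\|\phi_g\|\ge\|g\|_{L^{p'}(\gtg;A^*)}$; closedness of the range is then automatic.

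The genuine problem is the endpoint $p=1$, where you have attached the necessary modification to the wrong half of the argument. For the norming property at $p=1$ no modification is needed: the exponent $p-1=0$ makes your general formula collapse to $g=\sum_i x_i^*\mathbf{1}_{E_i}$, which satisfies $\|g\|_{L^{\infty}(\gtg;A^*)}=1$ and $\langle\tilde f,\phi_g\rangle\ge(1-\varepsilon)\sum_i\|a_i\|_A|E_i|=(1-\varepsilon)\|\tilde f\|_{L^1}$. Your proposed replacement --- keeping only the single index where $\|a_i\|_A$ is near-maximal --- would instead produce only $\|a_{i_0}\|_A|E_{i_0}|$, which is not comparable to $\sum_i\|a_i\|_A|E_i|$, so as written the $p=1$ norming claim fails. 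The step that actually degenerates when $p'=\infty$ is the opposite inequality $\|\phi_g\|\ge\|g\|_{L^{\infty}(\gtg;A^*)}$: there your symmetric formula involves the meaningless exponent $p'-1$, and finitely-valued simple functions are not norm-dense in $L^{\infty}(\gtg;A^*)$. The correct repair is the localization you hint at, but applied here: choose a set $E$ of finite positive measure on which $\|g(\cdot)\|_{A^*}$ is within $\varepsilon$ of its essential supremum and $g$ is within $\varepsilon$ of a constant $x^*$ (possible by essential separability of the range), then test $\phi_g$ against $f=a\mathbf{1}_E$ with $\|a\|_A=1$ and $x^*(a)\ge(1-\varepsilon)\|x^*\|_{A^*}$. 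With the two endpoint devices swapped into their correct positions, the proof is complete.
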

		We will denote $\phi_g$ by $g$ when the notation is obvious.

		To study the characterization of the vector-valued case of Weyl multipliers, we first need to define a product $\divideontimes$ for $f \in L^1(G \times \widehat{G}; A)$ and $ \nu \in  \left(L^{p}(G \times \widehat{G}; A^*)\right)^*,1 <p< \infty $ such that if $\nu \in L^{p'}(G \times \widehat{G}; A)$, then $\nu  \divideontimes f = \nu \times f$. For this, we will use duality as follows.
		
		For $f \in L^1(G \times \widehat{G};A), g \in L^p(G \times \widehat{G};A)$ and $h \in L^{p'}(G \times \widehat{G};A^*)$, consider
		\begin{align*} \left \langle  f \times g, h \right \rangle  =&  \int_{G \times \widehat{G}} \left \langle  f \times g (x, \chi), h(x,\chi) \right \rangle   dx d \chi \\
		=&  \int_{G \times \widehat{G}} \left \langle  \int_{G \times \widehat{G}} f(x',\chi')L^t_{(x',\chi')}g(x,\chi) dx' d\chi', h(x,\chi)\right \rangle   dx d\chi \\
		=& \int_{G \times \widehat{G}} \left( \int_{G \times \widehat{G}} \left \langle  f(x',\chi')L^t_{(x',\chi')}g(x,\chi),h(x,\chi)\right \rangle  dx d \chi  \right) dx' d \chi' \\
		=& \int_{G \times \widehat{G}} \left(  \left \langle  f(x',\chi'), \int_{G \times \widehat{G}} L^t_{(x',\chi')}g(x,\chi) \odot h(x,\chi) dx d \chi  \right \rangle   \right) dx' d \chi' .
		\end{align*}
		So, if we define $g \oplus h(x',\chi'):=\int_{G \times \widehat{G}} L^t_{(x',\chi')}g(x,\chi) \odot h(x,\chi) dx d \chi$, we get
		$$\left \langle  f \times g, h \right \rangle  = \left \langle  f, g \oplus h\right \rangle .	 $$
		\begin{lemma}
		For $f \in L^1(G \times \widehat{G};A)$ and $g \in L^p(G \times \widehat{G};A^*), 1 < p < \infty$, define
		$$f \oplus g(x',\chi'):=\int_{G \times \widehat{G}} L^t_{(x',\chi')}f(x,\chi) \odot g(x,\chi) dx d \chi.$$
		Then $f \oplus g \in L^p(G \times \widehat{G};A^*)$.
		\end{lemma}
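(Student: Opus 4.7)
The plan is to produce $f \oplus g \in L^p(\gtg; A^*)$ by a direct norm estimate: first bound $\|(f \oplus g)(x',\chi')\|_{A^*}$ pointwise, and then integrate using Minkowski's integral inequality in $L^p(\gtg)$ (equivalently, Young's convolution inequality on the abelian group $\gtg$).

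From the definition of the Arens product, for $a \in A$ and $\psi \in A^*$ one has $|(a \odot \psi)(b)| = |\psi(ba)| \leq \|\psi\|_{A^*}\|a\|_A\|b\|_A$, so $\|a\odot\psi\|_{A^*} \leq \|a\|_A\|\psi\|_{A^*}$. Since characters have modulus one, the definition of $L^t$ gives $\|L^t_{(x',\chi')}f(x,\chi)\|_A = \|f(xx'^{-1},\chi\overline{\chi'})\|_A$. Applying Minkowski's inequality for the Bochner integral defining $(f \oplus g)(x',\chi')$ yields
\[
\|(f \oplus g)(x',\chi')\|_{A^*} \;\leq\; \int_{\gtg} \|f(xx'^{-1},\chi\overline{\chi'})\|_A\,\|g(x,\chi)\|_{A^*}\,dx\,d\chi.
\]

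Set $F(x,\chi) := \|f(x,\chi)\|_A \in L^1(\gtg)$ and $G(x,\chi) := \|g(x,\chi)\|_{A^*} \in L^p(\gtg)$. Taking the $L^p$-norm in $(x',\chi')$, the change of variables $y = xx'^{-1}$, $\eta = \chi\overline{\chi'}$ (which is measure-preserving for each fixed $(x',\chi')$ by translation invariance of the Haar measure on the abelian group $\gtg$) followed by Minkowski's integral inequality in $L^p(dx'\,d\chi')$ gives
\[
\|f \oplus g\|_{L^p(\gtg;A^*)} \;\leq\; \int_{\gtg} F(y,\eta)\,\|G(yx',\eta\chi')\|_{L^p(dx'\,d\chi')}\,dy\,d\eta \;=\; \|F\|_1\,\|G\|_p \;=\; \|f\|_1\|g\|_p,
\]
where the last-but-one equality uses translation invariance once more.

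The principal obstacle is not this size estimate but verifying strong (Bochner) measurability of $(x',\chi') \mapsto (f \oplus g)(x',\chi')$ as an $A^*$-valued map, since $A^*$ is generally non-separable. I would handle this via Pettis' measurability theorem: first check it for $f$ and $g$ simple (countably-valued) functions, where the integrand is a simple combination in $(x,\chi)$ and the resulting integral takes values in a separable subspace of $A^*$; then approximate general $f \in L^1(\gtg;A)$ and $g \in L^p(\gtg;A^*)$ by such simple functions in the respective norms and use the $L^p$ estimate just proved to pass to the limit, so that the essential separable-valuedness and weak measurability (which descends from the pointwise Arens pairing) together yield strong measurability.
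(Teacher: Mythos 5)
Your proposal is correct and takes essentially the same route as the paper: a pointwise bound $\|a\odot\psi\|_{A^*}\le\|a\|_A\|\psi\|_{A^*}$ combined with the standard Young-type estimate $\|f\oplus g\|_{L^p}\le\|f\|_{L^1}\|g\|_{L^p}$, which you obtain via Minkowski's integral inequality while the paper uses the equivalent H\"older splitting $\|f\|_A=\|f\|_A^{1/p'}\|f\|_A^{1/p}$ followed by Fubini. Your extra paragraph on strong (Bochner) measurability of the $A^*$-valued integral is a legitimate point that the paper's proof passes over in silence, but it does not change the substance of the argument.
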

		\begin{proof}
		For $f \in L^1(G \times \widehat{G};A)$ and $g \in L^p(G \times \widehat{G};A^*)$, we have
		\begin{align*}
			\|	f \oplus g\|^p_{L^p(G \times \widehat{G};A^*)}  =&  \int_{G \times \widehat{G}}\|f \oplus g(x,\chi)\|^p_{A^*} dx d\chi  \\
			\leq & \int_{G \times \widehat{G}}\left ( \int_{G \times \widehat{G}}  \|f(xx'^{-1},\chi \overline{\chi'})\|_A^{p-1} (\|f(xx'^{-1},\chi \overline{\chi'})\|_A \|g(x,\chi)\|_{A^*}^p) dx' d \chi' \right ) dx d\chi 
			\\ \leq & \|f\|_{L^1(G \times \widehat{G};A)}^{p/ p'} \int_{G \times \widehat{G}}\left ( \int_{G \times \widehat{G}}   (\|f(xx'^{-1},\chi \overline{\chi'})\|_A \|g(x,\chi)\|_{A^*}^p) dx' d \chi' \right ) dx d\chi.
		\end{align*}
		Now by applying Fubini's theorem, we get
		$$	\|	f \oplus g\|_{L^p(G \times \widehat{G};A^*)} \leq \|f\|_{L^1(G \times \widehat{G};A)} \|g\|_{L^p(G \times \widehat{G};A^*)}.  $$ 
		\end{proof}
		\begin{definition}
		For $1 <p< \infty $, let $f \in L^1(G \times \widehat{G};A)$ and $ \nu \in  \left(L^{p'}(G \times \widehat{G};A^*)\right)^*$. Define 
		$$\left \langle  \nu \divideontimes f, h \right \rangle  := \nu( f \oplus h)  \  \forall  \   h  \in L^{p'}(G \times \widehat{G};A^*).$$
		\end{definition}
		\begin{remark} \label{reason}
		The way $f \oplus h$ is constructed,  it is clear that if in particular $\nu \in L^p(G \times \widehat{G}; A)$, then $\nu  \divideontimes f = \nu \times f$.
		\end{remark}
		
		For every  $a \in A$, define an operator $\Lambda_a$ on $A$ as \begin{equation} \label{lmda} \Lambda_a(f):=af, f \in L^p(\gtg;A).\end{equation}
		\begin{proposition} \label{prop}
		Let $A$ be commutative Banach algebra with an approximate identity. For $1 <p< \infty, $  let $T:L^1(G \times \widehat{G};A) \to L^p(G \times \widehat{G};A)$ be a continuous linear operator.  Then $T$ is a Weyl multiplier iff $T$ satisfies the following:
		\begin{equation} \label{eqc}
			T  (\Lambda_a  T^t_{(x,\chi )}) =(\Lambda_a T^t_{(x,\chi )}) T  \  \forall  a \in A \text{ and } (x,\chi) \in G \times \widehat{G}.
		\end{equation}
		\end{proposition}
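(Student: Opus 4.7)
The plan is to treat the two directions separately: the forward implication is a direct algebraic manipulation resting on commutativity of $A$, while the converse requires recovering both defining properties of a Weyl multiplier and concludes with a limiting argument along a bounded approximate identity. For the forward direction, assume $T$ satisfies conditions (i) and (ii) from the definition of a Weyl multiplier. Then, using commutativity of $A$ to swap $a\, T^t_{(x,\chi)}(f)$ into $T^t_{(x,\chi)}(f)\,a$ (and back at the end),
\begin{equation*}
T\Lambda_a T^t_{(x,\chi)}(f) = T\bigl(T^t_{(x,\chi)}(f)\,a\bigr) = T(T^t_{(x,\chi)}(f))\,a = T^t_{(x,\chi)}(T(f))\,a = \Lambda_a T^t_{(x,\chi)} T(f),
\end{equation*}
which is exactly the identity \eqref{eqc}.

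For the converse, the first move is to specialize \eqref{eqc} to $(x,\chi)=(e,\boldsymbol 1)$: since $T^t_{(e,\boldsymbol 1)}$ is the identity, this gives $T(af)=aT(f)$, and commutativity of $A$ rewrites this as $T(fa)=T(f)\,a$, establishing (ii). For a general $(x,\chi)$, condition \eqref{eqc} reads $T(a T^t_{(x,\chi)}(f)) = a T^t_{(x,\chi)}(T(f))$. Using the just-established (ii) together with commutativity of $A$ on the left-hand side, this reduces to
\begin{equation*}
T(T^t_{(x,\chi)}(f))\,a \;=\; T^t_{(x,\chi)}(T(f))\,a \qquad \text{for every } a\in A.
\end{equation*}

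To upgrade this $a$-indexed family of identities to the unadorned equation $T T^t_{(x,\chi)}(f) = T^t_{(x,\chi)} T(f)$ (which is condition (i)), I would take $a = e_\beta$ running through a bounded approximate identity of $A$ with $\sup_\beta \|e_\beta\|_A \leq M$ and pass to the limit in $L^p(\gtg; A)$. The crucial step, and the main (only) obstacle, is an auxiliary fact that for every $h \in L^p(\gtg; A)$ one has $h\, e_\beta \to h$ in $L^p(\gtg; A)$; I would establish this by Lebesgue dominated convergence, since $h(x,\chi)e_\beta \to h(x,\chi)$ in $A$ pointwise in $(x,\chi)$ (by the approximate identity property applied to the element $h(x,\chi)\in A$) while the pointwise difference is dominated by $(M+1)\|h(x,\chi)\|_A$, which is in $L^p(\gtg)$. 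Applying this auxiliary lemma to $h=T(T^t_{(x,\chi)}(f))$ and to $h=T^t_{(x,\chi)}(T(f))$ yields (i), so $T$ is a Weyl multiplier.
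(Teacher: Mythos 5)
Your forward direction and your opening move in the converse (specializing \eqref{eqc} to $(x,\chi)=(e,\mathbf{1})$ to obtain $T(af)=aT(f)$, hence condition (ii) by commutativity) coincide with the paper's proof. For the remaining half of the converse you take a genuinely different and more economical route. Having reduced \eqref{eqc} to the family of identities $a\,T(T^t_{(x,\chi)}f)=a\,T^t_{(x,\chi)}(Tf)$ for all $a\in A$, you strip off the $a$ by letting $a=e_\beta$ run through a bounded approximate identity and using that $\Lambda_{e_\beta}\to I$ strongly on $L^p(\gtg;A)$. The paper instead exhibits $T(T^t_{(x,\chi)}f)$ and $T^t_{(x,\chi)}(Tf)$ as $L^p$-limits of one and the same net, namely $T\bigl(e_\beta T^t_{(x,\chi)}(f_\alpha\times f)\bigr)=e_\beta T^t_{(x,\chi)}\bigl(T(f_\alpha\times f)\bigr)$, where $\{e_\beta f_\alpha\}$ is a bounded approximate identity of $L^1(\gtg;A)$; this requires Proposition~\ref{lr1} (twisted translation slides past twisted convolution), the bound $\|T\|$, and the convolution structure, but all the relevant convergence happens in $L^1(\gtg;A)$ before $T$ is applied. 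Your argument dispenses with twisted convolution entirely, which is a genuine simplification; what you pay for it is having to prove norm convergence of $e_\beta h\to h$ directly in $L^p(\gtg;A)$.

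That is the one place where your justification is imprecise: $\{e_\beta\}$ is a net, and the dominated convergence theorem is false for nets (pointwise convergence of a uniformly dominated net does not control the integrals), so the argument as written does not quite close. The conclusion $\|e_\beta h-h\|_{L^p(\gtg;A)}\to 0$ is nevertheless true and standard: check it first for $A$-valued simple functions $s=\sum_{i=1}^n\chi_{E_i}a_i$, where $\|e_\beta s-s\|_p\le\sum_i|E_i|^{1/p}\|e_\beta a_i-a_i\|_A\to 0$, and then pass to general $h$ using the uniform bound $\|\Lambda_{e_\beta}\|\le M$ and the density of simple functions in the Bochner space $L^p(\gtg;A)$. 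With that repair your proof is complete.
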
 
		\begin{proof}
		Let $T$ be a Weyl multiplier. Then for all $f \in L^1(G \times \widehat{G};A)$ and $(x, \chi) \in G \times \widehat{G}$, we have
		\begin{align*}
			T  (\Lambda_a T^t_{(x,\chi )}) (f)=  T(a  T^t_{(x,\chi )}(f))= aT(  T^t_{(x,\chi )}(f))=   a  T^t_{(x,\chi )}(Tf)=(\Lambda_a  T^t_{(x,\chi )})  T(f).
		\end{align*}
		
		Conversly, for $a \in A$, we have 
		\begin{equation} \label{eqx}
			T(af)=T((\Lambda_a  T^t_{(e,\mathbf{1})})(f))= (\Lambda_a T^t_{(e,\mathbf{1})})(T(f)) = a T(f).
		\end{equation}

		Now, let $\{f_\alpha\}_{\alpha \in \Delta }$ and $ \{e_\beta\}_{\beta \in M }$ be approximate identities of $L^1(G \times \widehat{G})$ and $A$ respectively so that $\{e_\beta f_\alpha \}_{(\alpha,\beta) \in  \Delta \times  M }$ is an approximate identity for $L^1(G \times \widehat{G};A)$. Then, by using equation \eqref{lr}, for all $f \in L^1(G \times \widehat{G};A)$, we have 
		$$T\left( e_\beta T^t_{(x, \chi)}(f_\alpha \times f) \right) =  T(e_\beta f_\alpha \times T^t_{(x,\chi)}f) .$$
		Hence,
		\begin{align*}
			\|T\left( e_\beta T^t_{(x, \chi)}(f_\alpha \times f) \right) - T(T^t_{(x, \chi)}f)\|_{L^p(G \times \widehat{G};A)}=& \|T(e_\beta f_\alpha \times T^t_{(x,\chi)}f-T^t_{x, \chi)}f) \|_{L^p(G \times \widehat{G};A)} \\ 
			\leq & \|T\| \|e_\beta f_\alpha \times T^t_{(x,\chi)}f - T^t_{(x,\chi)}f \|_{L^1(G \times \widehat{G};A)}.
		\end{align*}
		Since $\{e_\beta f_\alpha \}_{(\alpha,\beta) \in  \Delta \times  M }$  is an approximate identity for $L^1(G \times \widehat{G};A)$, we have
		\begin{equation} \label{eq1}
			T(T^t_{(x, \chi)}f) = \lim_{\alpha, \beta} 	T\left( e_\beta T^t_{(x, \chi)}(f_\alpha \times f) \right)   \    \forall f \in L^1(G \times \widehat{G};A) .
		\end{equation} 
		Similarly for all $f \in L^1(G \times \widehat{G};A)$, we have 
		\begin{align*}
			\| e_\beta T^t_{(x, \chi)}( T ( f_\alpha \times f))  - T^t_{(x, \chi)}(Tf)\|_{L^p(G \times \widehat{G};A)}=& \|  T^t_{(x, \chi)}(  e_\beta T ( f_\alpha \times f))  - T^t_{(x, \chi)}(Tf)\|_{L^p(G \times \widehat{G};A)}\\ =& 	\|  T^t_{(x, \chi)}\left(  e_\beta T ( f_\alpha \times f)-Tf \right)\|_{L^p(G \times \widehat{G};A)}\\ =& 	\|   T (  e_\beta f_\alpha \times f)-Tf \|_{L^p(G \times \widehat{G};A)}\\ \leq & \|T\|\| e_\beta f_\alpha \times f-f\|_{L^1(G \times \widehat{G};A)}.
		\end{align*}
		Thus,
		\begin{equation} \label{eq2}
			T^t_{(x, \chi)}(Tf)	 =\lim_{\alpha, \beta} e_\beta T^t_{(x, \chi)}( T ( f_\alpha \times f))  \    \forall f \in L^1(G \times \widehat{G};A).
		\end{equation} 
		Now using equations \eqref{eqc}, \eqref{eq1} and \eqref{eq2}, we get $TT^t_{(x,\chi)}= T^t_{(x,\chi)}T $ for all $f \in L^1(G \times \widehat{G};A)$.
		\end{proof}
		The following theorem characterizes the vector-valued Weyl multipliers in terms of twisted convolution.
		\begin{theorem} \label{main}
		Let $A$ be a commutative Banach algebra with a bounded approximate identity. For $1 < p < \infty$, let  $T: L^1(G \times \widehat{G};A) \to L^p(G \times \widehat{G};A)$ be a continuous linear operator. Then the following are equivalent:
		\begin{enumerate}
			\item[(i)] $T$ is a Weyl multiplier.
			\item[(ii)] $	T  (\Lambda_a  T^t_{(x,\chi )}) =(\Lambda_a  T^t_{(x,\chi )})  T  \text{ for all }  a \in A \text{ and } (x,\chi) \in G \times \widehat{G}$ where $\Lambda_a$ is defined in equation \eqref{lmda}.
			\item[(iii)] $T(f \times g)= Tf \times g$ for all $f,g \in L^1(G \times \widehat{G};A)$.
			\item [(iv)] There exists an element   $\nu$ in $\left(L^{p'}(G \times \widehat{G};A^{*})\right)^*$ such that $Tf= \nu \divideontimes f$ for all $f \in L^1(G \times \widehat{G};A)$.		
		\end{enumerate}
		\end{theorem}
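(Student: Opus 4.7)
The proof splits as the equivalence $(i)\Leftrightarrow(ii)$, already furnished by Proposition \ref{prop}, together with the cycle $(ii)\Rightarrow(iii)\Rightarrow(iv)\Rightarrow(i)$.

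For $(ii)\Rightarrow(iii)$, I would reduce to the scalar Theorem \ref{Weyl m}. For each $a\in A$ and each $x^*\in A^*$, define the scalar operator $\Phi_{a,x^*}: L^1(\gtg)\to L^p(\gtg)$ by $\Phi_{a,x^*}(f):=x^*\circ T(fa)$. Using condition (ii) together with the bounded approximate identity of $A$, in the same approximation style as the $p=1$ argument earlier in the paper, $\Phi_{a,x^*}$ commutes with scalar twisted translations. Theorem \ref{Weyl m} then yields $\Phi_{a,x^*}(f\times g)=\Phi_{a,x^*}(f)\times g$ for all scalar $f,g\in L^1(\gtg)$. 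Evaluating on elementary tensors of the form $fa$ and $gb$, using commutativity of $A$ to reassemble the algebra elements, together with the norming property of Proposition \ref{dual}, extends this identity to $T(f\times g)=T(f)\times g$ for all $f,g\in L^1(\gtg;A)$.

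For $(iii)\Rightarrow(iv)$, take a bounded approximate identity $\{g_\alpha\}\subseteq L^1(\gtg;A)$ (built from approximate identities of $L^1(\gtg)$ and of $A$). The net $\{Tg_\alpha\}$ is norm-bounded in $L^p(\gtg;A)$, which embeds isometrically into $(L^{p'}(\gtg;A^*))^*$ by Proposition \ref{dual}; Banach--Alaoglu then produces a weak-$*$ accumulation point $\nu$ along some subnet. For every $f\in L^1(\gtg;A)$ and every $h\in L^{p'}(\gtg;A^*)$, the duality identity $\langle k\times f,h\rangle=\langle k,f\oplus h\rangle$ (which holds for $k\in L^p$ by exactly the same calculation the paper performs preceding the definition of $\divideontimes$) combined with (iii) gives
\[
\langle T(g_\alpha\times f),h\rangle=\langle T(g_\alpha)\times f,h\rangle=\langle T(g_\alpha),f\oplus h\rangle\longrightarrow\nu(f\oplus h)=\langle\nu\divideontimes f,h\rangle
\]
along the subnet. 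On the other hand $g_\alpha\times f\to f$ in $L^1(\gtg;A)$, so $T(g_\alpha\times f)\to Tf$ in $L^p$-norm along the full net, and in particular $\langle T(g_\alpha\times f),h\rangle\to\langle Tf,h\rangle$. Matching the two limits and invoking the norming property of Proposition \ref{dual} forces $Tf=\nu\divideontimes f$.

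The implication $(iv)\Rightarrow(i)$ reduces to two direct pairings against test functions $h\in L^{p'}(\gtg;A^*)$: that $\nu\divideontimes(fa)=(\nu\divideontimes f)a$ (using commutativity of $A$ to slide $a$ through the Arens product hidden in $\oplus$) and that $\nu\divideontimes(T^t_{(x,\chi)}f)=T^t_{(x,\chi)}(\nu\divideontimes f)$ (the analogue of Proposition \ref{lr1} transported through duality). The main obstacle I anticipate is step $(iii)\Rightarrow(iv)$: one must align the weak-$*$ accumulation of $\{Tg_\alpha\}$ with the $L^p$-norm convergence of $\{T(g_\alpha\times f)\}$ along the same subnet, and this alignment can only recover equality thanks to the norming property. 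Commutativity of $A$ is also essential in $(ii)\Rightarrow(iii)$ (to identify left and right $A$-actions so that Proposition \ref{prop} applies).
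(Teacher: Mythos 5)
Your proposal is correct, and its overall architecture ($(i)\Leftrightarrow(ii)$ via Proposition \ref{prop}, then a cycle through $(iii)$, $(iv)$ and back to $(i)$) matches the paper; the legs $(iii)\Rightarrow(iv)$ (bounded approximate identity, embedding of $L^p(\gtg;A)$ into $\left(L^{p'}(\gtg;A^*)\right)^*$, Banach--Alaoglu, matching the weak-$*$ limit against the norm limit via Remark \ref{reason} and the norming property) and $(iv)\Rightarrow(i)$ (the identity $T^t_{(x,\chi)}f\oplus h=f\oplus\tilde h_{(x,\chi)}$ tested against $L^{p'}(\gtg;A^*)$, plus commutativity of $A$ to slide $a$ through $\oplus$) are essentially verbatim the paper's arguments. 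The one genuine divergence is how you reach $(iii)$: the paper proves $(i)\Rightarrow(iii)$ by rewriting the scalar argument of \cite[Theorem 5.2]{RSK} to obtain $T(f\times\nu)=Tf\times\nu$ for scalar measures $\nu\in M(\gtg)$, then specializing to $d\nu=\varphi\,dx\,d\chi$ and invoking density of elementary tensors in $L^1(\gtg)\widehat{\otimes}A\cong L^1(\gtg;A)$; you instead prove $(ii)\Rightarrow(iii)$ by scalarizing $T$ through the functionals $\Phi_{a,x^*}(f)=x^*\circ T(fa)$ and applying Theorem \ref{Weyl m}, exactly as the paper does in the $p=1$ case of Section 3. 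Both routes lean on the scalar multiplier theory and both require commutativity (or the left/right bookkeeping of $M_a$ versus $\Lambda_a$) plus the separating action of $A^*$ to reassemble the vector-valued identity from its scalarizations; yours has the advantage of uniformity with the $L^1$--$L^1$ case and avoids re-deriving the measure-convolution identity, while the paper's version makes the intermediate statement $T(f\times\nu)=Tf\times\nu$ for $\nu\in M(\gtg)$ explicitly available. Your closing remarks correctly identify the two delicate points: aligning the subnet giving weak-$*$ convergence of $\{Tg_\alpha\}$ with the norm convergence of $\{T(g_\alpha\times f)\}$, and the indispensability of commutativity in passing between the two $A$-module actions.
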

		\begin{proof}
		$(i) \Leftrightarrow (ii)$ follows from Proposition \ref{prop}.
		
		
		$(i) \Rightarrow (iii)$.  For $ f \in L^1(G \times \widehat{G};A)$ and $ \nu \in M(G \times \widehat{G})$, rewriting the proof of $(a) \Rightarrow (b)$ of \cite[Theorem 5.2]{RSK} , we can show that 
		$$\int_{G \times \widehat{G}} \left \langle  T(f \times \nu)(x,\chi),h(x,\chi)\right \rangle  dx d\chi= \int_{G \times \widehat{G}}  \left \langle  (Tf \times \nu)(x,\chi),h(x,\chi)\right \rangle  dx d\chi,$$
		for all $h \in L^{p'}(G \times \widehat{G};A^*)$.
		Hence using Proposition \ref{dual}, we get $ T(f \times \nu)= Tf \times \nu$ for all $f \in L^1(G \times \widehat{G};A)$ and $ \nu \in M(G \times \widehat{G}).$
		
		Now, for $a \in A, \varphi \in L^1(G \times \widehat{G}),$ let $g=a\varphi$. If we define $d \nu(x,\chi):= \varphi(x,\chi) dx d\chi$, then $\nu \in M(G \times \widehat{G})$.
		Hence $$Tf \times g = Tf \times a \varphi=a (Tf \times \nu) =aT(f \times \nu)= T(f \times g).$$
		Now using the density of elementary tensors in $L^1(G \times \widehat{G}) \widehat{\otimes} A$ and the fact that $L^1(G \times \widehat{G})\widehat{\otimes} A \cong L^1(G \times \widehat{G};A)$, we have $Tf \times g =T(f \times g)$ for all $ f , g \in L^1(G \times \widehat{G};A)$.
		
		$(iii) \Rightarrow (iv)$. 
		Let $\{g_\alpha\}$ be a bounded approximate identity of $L^1(G \times \widehat{G};A)$ with $\|g_\alpha\|=1$. Then for $f \in L^1(G \times \widehat{G};A), Tf = \lim_{\alpha} Tg_\alpha \times f $ in $L^p(G \times \widehat{G};A)$ norm. Therefore
		\begin{equation} \label{1} \left \langle  Tf,h \right \rangle  = \lim_{\alpha} \left \langle  Tg_\alpha \times f, h \right \rangle   \  h \in L^{p'}(G \times \widehat{G};A^*).
		\end{equation} Also since 
		$$\|Tg_\alpha\|_{L^p(G \times \widehat{G};A)} \leq \|T\|\|g_\alpha\|_{L^1(G \times \widehat{G};A)},$$
		$\{Tg_\alpha\}$ is a norm bounded subset of  $L^p(G \times \widehat{G};A)$ and hence of  $L^p(G \times \widehat{G};A^{**})$. Since $L^p(G \times \widehat{G};A^{**}) \hookrightarrow \left(L^{p'}(G \times \widehat{G};A^{*}) \right) ^* $, by Banach Alaoglu's theorem, there exists a  subnet $\{Tg_\beta\}$ of $\{Tg_\alpha\}$ and a $\nu \in \left(L^{p'}(G \times \widehat{G};A^{*}) \right)^*$ such that $ \{Tg_\beta\} \to \nu$ in weak$^*$ topology. 
		Then for all $h \in L^{p'}(G \times \widehat{G};A^*)$, we have
		\begin{align}
			\nonumber	\left \langle  \nu \divideontimes f,h \right \rangle  = \nu (f \oplus h) =& \lim_\beta Tg_\beta(f \oplus h) \\=&  \label{2} \lim_\beta \left \langle  Tg_\beta \divideontimes f, h \right \rangle . 		
		\end{align}
		
		As $Tg_\beta \in L^p(G \times \widehat{G};A)$, using equations \eqref{1}, \eqref{2} and Remark \ref{reason}, we have
		$$ \left \langle  Tf,h \right \rangle  = \left \langle  \nu \divideontimes f,h \right \rangle   ,  h \in L^{p'}(G \times \widehat{G};A^*).$$
		Hence $Tf= \nu \divideontimes f$ for all $f \in L^1(G \times \widehat{G};A)$ as required.
		
		$(iv) \Rightarrow (i).$ For $(x, \chi)  \in G \times \widehat{G}$ and $f \in L^1(G \times \widehat{G};A)$ and $h \in L^{p'}(G \times \widehat{G};A^*)$  we first claim that 
		$$T^t_{(x,\chi)}f \oplus h = f \oplus \tilde{h}_{(x,\chi)},\text{ where } \tilde{h}_{(x,\chi)}(x',\chi')=h(x'x,\chi'\chi)\chi(x').$$
		For $g \in L^p(G \times \widehat{G};A)$, we have
		\begin{align*}&\int_{G \times \widehat{G}} \left \langle T^t_{(x,\chi)}f \oplus h(x',\chi') , g(x',\chi') \right \rangle dx' d\chi'\\=&
			\int_{G \times \widehat{G}} \left \langle \int_{G \times \widehat{G}} L^t_{(x',\chi')}T^t_{(x,\chi)}f(x'',\chi'') \odot h(x'',\chi'') dx'' d \chi'', g(x',\chi') \right \rangle dx'd\chi'
			\\=& \int_{G \times \widehat{G}} \left \langle \int_{G \times \widehat{G}} L^t_{(x',\chi')}T^t_{(x,\chi)}f(x'',\chi'') g(x',\chi') dx'd\chi' , h(x'',\chi'') \right \rangle dx'' d \chi'' 
			\\=& \int_{G \times \widehat{G}} \left \langle \int_{G \times \widehat{G}} T^t_{(x,\chi)}f(x''x'^{-1},\chi''\overline{\chi'}) \chi''\overline{\chi'}(x') g(x',\chi') dx'd\chi' , h(x'',\chi'') \right \rangle dx'' d \chi'' 
			\\=& \int_{G \times \widehat{G}} \left \langle \int_{G \times \widehat{G}} f(x''x'^{-1}x^{-1},\chi''\overline{\chi'\chi}) \chi(x''x'^{-1}x^{-1}) \chi''\overline{\chi'}(x') g(x',\chi') dx'd\chi' , h(x'',\chi'') \right \rangle dx'' d \chi'' .
		\end{align*}
		Now taking the map $(x'',\chi'') \to (x''x,\chi''\chi)$, we get
		\begin{align*}
			&\int_{G \times \widehat{G}} \left \langle T^t_{(x,\chi)}f \oplus h(x',\chi') , g(x',\chi') \right \rangle dx' d\chi'\\
			=& \int_{G \times \widehat{G}} \left \langle \int_{G \times \widehat{G}} f(x''x'^{-1},\chi''\overline{\chi'}) \chi(x''x'^{-1}) \chi''\chi\overline{\chi'}(x') g(x',\chi') dx'd\chi' , h(x''x,\chi''\chi) \right \rangle dx'' d \chi''. \\
			=& \int_{G \times \widehat{G}} \left \langle \int_{G \times \widehat{G}} \left(f(x''x'^{-1},\chi''\overline{\chi'}) \chi''\overline{\chi'}(x')\right) \chi(x'')  g(x',\chi') dx'd\chi' , h(x''x,\chi''\chi) \right \rangle dx'' d \chi'' \\
			=& \int_{G \times \widehat{G}} \left \langle \int_{G \times \widehat{G}} L^t_{(x',\chi')}f(x'',\chi'')\chi(x'')  g(x',\chi') dx'd\chi' , h(x''x,\chi''\chi) \right \rangle dx'' d \chi'' \\
			=& \int_{G \times \widehat{G}} \left \langle \int_{G \times \widehat{G}} L^t_{(x',\chi')}f(x'',\chi'') \odot \tilde{h}_{(x,\chi)}(x'',\chi'') dx'' d \chi'', g(x',\chi') \right \rangle dx'd\chi'\\
			=& \int_{G \times \widehat{G}} \left \langle f \oplus \tilde{h}_{(x,\chi)}(x',\chi') , g(x',\chi') \right \rangle dx' d\chi'	,	
		\end{align*}
		where $\tilde{h}_{(x,\chi)}(x'',\chi'')=h(x''x,\chi''\chi)\chi(x'').$ Hence the claim.
		
		Now for $f \in L^1(G \times \widehat{G};A)$, using the claim, it can be shown that 
		\begin{align*}
			\int_{G \times \widehat{G}} \left \langle  TT^t_{(x,\chi)}f(x',\chi'), h(x',\chi') \right \rangle  dx' \chi' =& \int_{G \times \widehat{G}} \left \langle  \nu \divideontimes T^t_{(x,\chi)}f(x',\chi'), h (x',\chi') \right \rangle   dx' d\chi' \\
			=& \int_{G \times \widehat{G}} \left \langle  \nu \divideontimes f(x',\chi'), \tilde{h}_{(x,\chi)}(x',\chi') \right \rangle   dx' d\chi' \\
			=& \int_{G \times \widehat{G}} \left \langle  Tf(x',\chi'),h(x'x,\chi'\chi)\chi(x') \right \rangle   dx' d\chi' \\	
			=& \int_{G \times \widehat{G}} \left \langle  Tf(x'x^{-1},\chi'\overline{\chi}),h(x',\chi')\chi(x'x^{-1}) \right \rangle   dx' d\chi' \\	
			=& \int_{G \times \widehat{G}} \left \langle  T^t_{(x,\chi)} Tf(x',\chi'),h (x',\chi') \right \rangle   dx' d\chi',				
		\end{align*}
		for all $h \in L^{p'}(G \times \widehat{G};A^*)$. Therefore $TT^t_{(x,\chi)}=T^t_{(x,\chi)}T$  for every $(x,\chi) \in G \times \widehat{G}$.
		
		Now we have to show that for $a \in A$, $T(af)=aT(f).$
		By using the commutativity of $A$, it can be easily seen that for all $h \in L^{p'}(G \times \widehat{G};A^*)$, we have
		\begin{align*}
			\int_{G \times \widehat{G}} \left \langle  aT(f)(x, \chi), h(x,\chi) \right \rangle  dx d\chi =&  \int_{G \times \widehat{G}} \left \langle  a(\nu \divideontimes f)(x, \chi), h(x,\chi) \right \rangle  dx d\chi \\
			=& \int_{G \times \widehat{G}} \left \langle  \nu \divideontimes f(x, \chi), a \odot h(x,\chi) \right \rangle  dx d\chi. 
		\end{align*}
		Again, by using commutativity, we get  $\nu(af \oplus h)= \nu (f \oplus h_a)$ where $h_a(x,\chi)= a\odot h(x,\chi)$. Therefore
		\begin{align*}
			\int_{G \times \widehat{G}} \left \langle  T(af)(x, \chi), h(x,\chi) \right \rangle  dx d\chi  =& \int_{G \times \widehat{G}} \left \langle  \nu \divideontimes af(x, \chi), h(x,\chi) \right \rangle  dx d\chi \\
			=& \nu (af \oplus h) \\ =& \nu(f \oplus h_a)\\=& \int_{G \times \widehat{G}} \left \langle  aT(f)(x, \chi), h(x,\chi) \right \rangle  dx d\chi 
		\end{align*}
		for all	$h \in L^{p'}(G \times \widehat{G};A^*)$.  Using duality, $T(af)=aT(f)$ for every $a \in A$.
		\end{proof}
	\section*{Acknowledgement}
	The authors thank Prof. R. Radha (IIT Madras) for giving some valuable suggestions. The first-named author
	(RS) wishes to thank the Graduate Aptitude Test in Engineering, India for its research fellowship.
	\section*{Competing Interests}
	The authors declare that they have no competing interests.
	\bibliographystyle{acm}
	\bibliography{referCWM.bib}
\end{document}